\documentclass[pdflatex,sn-mathphys-num]{sn-jnl}

\usepackage{graphicx}%
\usepackage{multirow}%
\usepackage{amsmath,amssymb,amsfonts}%
\usepackage{amsthm}%
\usepackage{mathrsfs}%
\usepackage{stmaryrd}
\usepackage[title]{appendix}%
\usepackage{xcolor}%
\usepackage{textcomp}%
\usepackage{manyfoot}%
\usepackage{booktabs}%
\usepackage{algorithm}%
\usepackage{algorithmicx}%
\usepackage{algpseudocode}%
\usepackage{listings}%
\usepackage{cleveref}
\usepackage{bm}%
\usepackage{cancel}

\theoremstyle{thmstyleone}%
\newtheorem{theorem}{Theorem}
\newtheorem{lemma}{Lemma}%
\newtheorem{assumption}{Assumption}%
\theoremstyle{thmstyletwo}%

\theoremstyle{thmstylethree}%

\begin{document}


\title{A Unified Model for Thermo- and Multiple-Network Poroelasticity with a Global-in-Time Iterative Decoupling Scheme}


\author[1]{\fnm{Huipeng} \sur{Gu}}\email{guhuipeng@suit-sz.edu.cn}

\author*[2]{\fnm{Mingchao} \sur{Cai}}\email{mingchao.cai@morgan.edu}

\author[3]{\fnm{Jingzhi} \sur{Li}}\email{li.jz@sustech.edu.cn}

\author[1]{\fnm{Yu} \sur{Jiang}}\email{yujiang686@163.com}

\affil[1]{\orgdiv{School of Humanities and Fundamental Sciences}, \orgname{Shenzhen University of Information Technology}, \orgaddress{\street{2188 Longxiang Avenue}, \city{Shenzhen}, \postcode{518172}, \state{Guangdong}, \country{P.R. China}}}

\affil[2]{\orgdiv{Department of Mathematics}, \orgname{Morgan State University}, \orgaddress{\street{1700 East Cold Spring Lane}, \city{Baltimore}, \postcode{21251}, \state{Maryland}, \country{USA}}}

\affil[3]{\orgdiv{Department of Mathematics}, \orgname{Southern University of Science and Technology}, \orgaddress{\street{1088 Xueyuan Avenue}, \city{Shenzhen}, \postcode{518055}, \state{Guangdong}, \country{P.R. China}}}


\abstract{
This paper introduces a unified model for thermo-poroelasticity and multiple-network poroelasticity, reformulated into a total-pressure-based system. We first establish the well-posedness of the problem via a Galerkin-based argument and subsequently introduce a robust space-time finite element approximation. To efficiently solve the fully coupled system, we propose a global-in-time iterative algorithm that sequentially decouples the mechanics from the transport equations, while incorporating necessary stabilization terms. We explicitly analyze the convergence rate and provide a rigorous proof that the proposed scheme constitutes a contraction mapping under physically relevant conditions, thereby ensuring its unconditional convergence. Numerical experiments confirm the theoretical stability bounds and demonstrate optimal convergence rates in both space and time, yielding solutions free of non-physical pressure oscillations.
}

\keywords{Poroelasticity; Total pressure formulation; Mixed finite element method; Global-in-time iterative decoupling; Unconditionally convergent.}

\maketitle

\section{Introduction}\label{sec1}

Poroelasticity provides a theoretical framework that bridges elasticity and fluid mechanics, describing the interaction between solid deformation and pore fluid pressure. Building on Terzaghi’s one-dimensional consolidation theory \cite{terzaghi1943theoretical}, Biot extended the framework to three dimensions and established the general theory of dynamic poroelasticity \cite{biot1935problemc,biot1941general}, which is now widely used in applications ranging from geotechnical engineering to biomechanics \cite{fu2021thermo, olabanjo2025finite}. More recently, mathematical models have been expanded to capture complex coupled behaviors, such as linear thermo-poroelasticity \cite{yi2024physics,cai2025efficient} and multiple-network poroelasticity \cite{lee2019mixed,zhao2025optimally} (e.g., the Barenblatt-Biot model for dual-porosity systems \cite{barenblatt1960basic}). Notably, recent efforts have also focused on unifying these complex interactions, establishing common analytical frameworks for multiphysics problems such as thermo/poro-viscoelasticity \cite{bonetti2025unified} and the coupled dynamics of thermoelastic diffusion and thermo-poroelasticity \cite{nataraj2025unified}. Identifying the shared mathematical structure of these systems, this work proposes a unified framework applicable to both linear thermo-poroelasticity and dual-porosity models, thereby streamlining analysis and numerical treatment.

Ensuring numerical stability and robustness for poroelastic systems remains challenging, particularly in incompressible or low-permeability regimes where classical displacement–pressure formulations often suffer from nonphysical oscillations and volumetric locking \cite{phillips2007coupling,phillips2009overcoming}. To mitigate these difficulties, Oyarzúa et al. proposed and analyzed reformulations based on a total pressure variable. \cite{oyarzua2016locking} and Lee et al. \cite{lee2017parameter}. This approach has become a widely adopted framework because it effectively avoids locking while remaining compatible with standard mixed finite element spaces, for example, using Taylor–Hood elements for the displacement and total pressure together with Lagrange elements for the pore pressure. This total-pressure formulation has subsequently been extended to several complex multiphysics settings, including multi-network poroelasticity \cite{zhao2025optimally}, frequency-domain poroelasticity \cite{carcamo2024stabilized}, Biot–Stokes coupling \cite{ruiz2022biot}, and Biot–Brinkman models \cite{cui2025locking}. 
These developments highlight both the theoretical and practical versatility of the formulation, demonstrating its flexibility in addressing increasingly sophisticated coupled processes in porous media.

Beyond stable formulations, the development of efficient numerical solvers is essential for large-scale simulations of time-dependent multiphysics systems. Monolithic approaches, although theoretically robust, often lead to extremely large coupled algebraic systems whose solution cost grows rapidly with problem size. To alleviate this difficulty, several strategies have been explored. One direction focuses on designing advanced preconditioners to accelerate the convergence of monolithic solvers \cite{chen2020robust,piersanti2021parameter}. Another direction aims to decouple the multiphysics processes using operator-splitting techniques \cite{chaabane2018splitting,altmann2024semi}. However, classical splitting schemes frequently impose restrictive time-step constraints depending on physical parameters, and traditional domain decomposition strategies remain sequential in time, limiting their scalability for long-time simulations.

To combine the stability of monolithic formulations with the computational efficiency of decoupled methods, iterative decoupling strategies have been proposed for poroelastic systems \cite{kim2011stability,mikelic2013convergence}. These methods apply fixed-point iterations between the mechanics and transport subproblems, thereby recovering the accuracy of the fully coupled solution while allowing the use of specialized solvers for each subsystem. Classical implementations typically proceed in a time-marching fashion \cite{dana2018multiscale,storvik2019optimization,almani2023convergence}. More recently, global-in-time iterative formulations have been introduced \cite{borregales2019partially,ahmed2020adaptive}, enabling one of the subproblems to be solved in parallel across the temporal domain and thus improving computational scalability. While these ideas have been primarily developed for the classical Biot model, their modular structure naturally lends itself to extensions toward more complex coupled systems such as thermo-poroelasticity and multi-network poroelasticity.

Motivated by these developments, this paper proposes a unified mathematical and computational framework for thermo- and multiple-network poroelasticity. In particular, we introduce a unified four-field mixed formulation based on the total pressure variable that can encompass both linear thermo-poroelasticity and dual-porosity models within a single framework. We first establish the well-posedness of the weak formulation using a Galerkin-based argument and then develop a robust space–time iterative solution strategy. The proposed approach decomposes the coupled system into two subproblems—a transport system and a mechanics system—which are solved sequentially in an alternating manner. This strategy is implemented through a fully discrete global-in-time algorithm that exploits the quasi-static nature of the mechanics subsystem, enabling a partially “parallel-in-time” implementation \cite{borregales2019partially}. Stabilization terms are incorporated to control nonphysical oscillations in the transport equations \cite{rodrigo2016stability}. We further provide a rigorous convergence analysis showing that the resulting scheme constitutes a contraction mapping under physically relevant conditions. Consequently, the proposed framework offers a unified, stable, and provably robust approach for simulating coupled processes in porous media.

The remainder of the paper is organized as follows. In Section \ref{sec:2}, we present the governing equations of the unified framework, the model reformulations, the corresponding weak formulation, and its iterative approximation. Section \ref{sec:3} details the fully discrete finite element approximations and the proposed global-in-time iterative decoupled algorithm, including a rigorous convergence analysis. Finally, Section \ref{sec:4} presents numerical experiments that verify the theoretical convergence rates, followed by concluding remarks in Section \ref{sec:5}.

\section{Mathematical model}
\label{sec:2}

Let $\Omega \subset \mathbb{R}^d$ ($d \in \{2, 3\}$) be an open and bounded domain with a Lipschitz continuous boundary $\Gamma := \partial\Omega$, and let $(0, T_f]$ be the time interval with $T_f > 0$. We denote the partial time derivative of a function $u$ by $\partial_t {u}$. Let $L^2(\Omega)$ be the space of square-integrable functions on $\Omega$, equipped with the standard inner product $(\cdot, \cdot)$ and norm $\|\cdot\|$. We denote by $H^1(\Omega)$ the Sobolev space of functions with square-integrable first derivatives, equipped with the norm $\|\cdot\|_{H^1}$, and by $H^1_0(\Omega)$ the subspace of functions in $H^1(\Omega)$ with zero trace on $\Gamma$. For a Banach space $X$ and $1 \le s \le \infty$, the Bochner space $L^s(0, T_f; X)$ consists of measurable functions $v: (0, T_f) \to X$ such that the norm $\|v\|_{L^s(0, T_f;X)}$ is finite, where $\|v\|_{L^s(0, T_f;X)} := ( \int_0^{T_f} \|v(t)\|_X^s \, dt )^{1/s}$ for $1 \le s < \infty$, and $\|v\|_{L^\infty(0, T_f;X)} := \operatorname*{ess\,sup}_{t \in (0, T_f)} \|v(t)\|_X$. Moreover, for a Hillbert space $X$, we define the Bochner-Sobolev space $H^1(0, T_f; X) := \{ v \in L^2(0, T_f; X) : \partial_t v \in L^2(0, T_f; X) \}$, which is endowed with the norm $\|v\|_{H^1(0, T_f; X)} := \big( \|v\|_{L^2(0, T_f; X)}^2 + \|\partial_t v\|_{L^2(0, T_f; X)}^2 \big)^{1/2} $.

The unified three-field model for coupled mechanics and transport processes seeks the solid displacement vector $\bm{u}$ and two generalized pressure variables $\phi$ and $\psi$ satisfying the following system:
\begin{subequations}
\label{threefield}
\begin{align} 
-\nabla\cdot\sigma(\bm{u}) + \alpha \nabla \phi + \beta \nabla \psi &= \bm{f}& \text{in} \ \Omega \times (0,T_f], 
\\
c_1\partial_t {\phi} - b_0\partial_t {\psi} + \alpha \nabla\cdot\partial_t {\bm{u}} - \nabla\cdot( \mathbf{K} \nabla \phi) + \gamma (\phi - \psi) &= g& \text{in} \ \Omega \times (0,T_f], 
\\ 
c_2\partial_t {\psi} - b_0\partial_t {\phi} + \beta \nabla\cdot\partial_t {\bm{u}} - \nabla\cdot( \mathbf{D} \nabla \psi) + \gamma (\psi - \phi) &= h& \text{in} \ \Omega \times (0,T_f].
\end{align} 
\end{subequations}
Here, the stress tensor is defined by 
\begin{equation*}
\sigma(\bm{u}) = 2\mu\varepsilon(\bm{u}) + \lambda(\nabla\cdot \bm{u})\mathbf{I}, \quad \text{with the strain tensor} \quad \varepsilon(\bm{u}) = \tfrac{1}{2} ( \nabla \bm{u} + \nabla \bm{u}^T ),
\end{equation*}
where $\mu$ and $\lambda$ are the Lamé coefficients, related to Young's modulus $E$ and Poisson's ratio $\nu$ via $\lambda = \frac{E\nu}{(1+\nu)(1-2\nu)}$ and $\mu = \frac{E}{2(1+\nu)}$, $\mathbf{I}$ is the identity matrix.
The system is driven by the body force $\bm{f}$ and source terms $g$ and $h$. Regarding the physical parameters: $\alpha$ and $\beta$ denote the fluid-solid coupling coefficients; $c_1$ and $c_2$ are the specific storage coefficients; $b_0$ is the dilation coefficient; $\mathbf{K}$, $\mathbf{D}$ are the permeability (or conductivities) tensors; and $\gamma$ is a transfer coefficient that governs the exchange rate. 

To close the system \eqref{threefield}, we partition the boundary $\Gamma$ into two disjoint parts $\Gamma_D$ and $\Gamma_N$, such that $\Gamma = \overline{\Gamma_D} \cup \overline{\Gamma_N}$, assuming $|\Gamma_D| > 0$. We consider the following mixed-type boundary conditions:
\begin{subequations}
\label{boundaries}
\begin{align}
    \bm{u} &= \bm{0} &&\text{on } \Gamma_D \times (0, T_f], \label{bc:u_dirichlet} \\
    [\sigma(\bm{u}) - (\alpha\phi + \beta\psi)\mathbf I]\bm{n} &= \bm{0} &&\text{on } \Gamma_N \times (0, T_f], \label{bc:u_neumann} \\
    \phi = 0, \quad \psi & = 0 &&\text{on } \Gamma \times (0, T_f], \label{bc:p}
\end{align}
\end{subequations}
where $\bm n$ represents the unit outward normal, along with the initial conditions:
\begin{equation}
    \bm{u}(0) = \bm{u}_0, \quad \phi(0) = \phi_0, \quad \psi(0) = \psi_0 \quad \text{in } \Omega. \label{ic}
\end{equation}


\subsection{Model specializations and reformulation}
The general system \eqref{threefield} can be specialized to describe several important physical phenomena. We present two prominent examples below.

\subsubsection*{Linear thermo-poroelasticity model}

A fully coupled linear thermo-poroelasticity model is recovered from the general system by identifying the generalized variables as the pore pressure, \(p := \phi\), and temperature, \(T:= \psi\). In this case, the transfer coefficient is set to \(\gamma = 0\), indicating no direct coupling between the pressure and temperature fields. The system then reduces to:
\begin{align*}
-\nabla\cdot\sigma(\bm{u}) + \alpha \nabla p + \beta \nabla T &= \bm{f}, 
\\
c_1\partial_t {p} - b_0\partial_t {T} + \alpha \nabla\cdot\partial_t {\bm{u}} - \nabla\cdot( \mathbf{K} \nabla p) &= g, 
\\ 
c_2\partial_t {T} - b_0\partial_t {p} + \beta \nabla\cdot\partial_t {\bm{u}} - \nabla\cdot( \mathbf{D} \nabla T) &= h. 
\end{align*}

\subsubsection*{Barenblatt-Biot model}

The framework also encapsulates the Barenblatt-Biot dual-porosity model, a special case of quasi-static Multiple-Network Poroelasticity (MPET). This is recovered by interpreting both $\phi$ and $\psi$ as pressure fields in different porous networks, denoted by $p_1 := \phi$ and $p_2 := \psi$. A key assumption for this model is the absence of cross-coupling in the storage terms, achieved by setting the dilation coefficient $b_0 = 0$. Under this assumption, the governing equations become: 
\begin{align*} 
-\nabla\cdot\sigma(\bm{u}) + \alpha \nabla p_1 + \beta \nabla p_2 &= \bm{f},
\\ 
c_1\partial_t {p_1} + \alpha \nabla\cdot\partial_t {\bm{u}} - \nabla\cdot(\mathbf{K} \nabla p_1) + \gamma(p_1 - p_2) &= g, 
\\ 
c_2\partial_t {p_2} + \beta \nabla\cdot\partial_t {\bm{u}} - \nabla\cdot(\mathbf{D} \nabla p_2) + \gamma(p_2 - p_1) &= h. 
\end{align*}

\subsubsection*{Four-field reformulation}

Following the approach in \cite{oyarzua2016locking} to obtain stable numerical approximations, particularly in the nearly incompressible limit, we introduce an auxiliary variable representing the total pressure, denoted by $\xi$, and defined as:
\begin{align*}
    \xi = -\lambda \nabla\cdot\bm{u} + \alpha \phi + \beta \psi.
\end{align*}
Substituting this new unknown into the original three-field system \eqref{threefield} yields the following four-field reformulation on $\Omega \times (0,T_f]$:
\begin{subequations}
\label{fourfield}
\begin{align} 
-\nabla\cdot( 2\mu\varepsilon(\bm{u}) - \xi\mathbf{I}) &= \bm{f}, 
\\ 
\nabla\cdot\bm{u} + \tfrac{1}{\lambda} \xi - \tfrac{\alpha}{\lambda} \phi - \tfrac{\beta}{\lambda} \psi &= 0, 
\\ 
\big(c_1 + \tfrac{\alpha^2}{\lambda}\big) \partial_t {\phi} + \big( \tfrac{\alpha \beta}{\lambda} - b_0 \big)\partial_t {\psi} - \tfrac{\alpha}{\lambda} \partial_t {\xi} - \nabla\cdot(\mathbf{K} \nabla \phi) + \gamma(\phi - \psi) &= g, 
\\
\big(c_2 + \tfrac{\beta^2}{\lambda}\big) \partial_t {\psi} + \big( \tfrac{\alpha \beta}{\lambda} - b_0 \big)\partial_t {\phi} - \tfrac{\beta}{\lambda} \partial_t {\xi} - \nabla\cdot(\mathbf{D} \nabla \psi) + \gamma(\psi - \phi) &= h.
\end{align}
\end{subequations}
The boundary conditions \eqref{boundaries} and initial conditions \eqref{ic} are adapted to the system \eqref{fourfield} by modifying the Neumann condition \eqref{bc:u_neumann} to read 
\begin{align}
    (2\mu\varepsilon(\bm{u}) - \xi\mathbf I)\bm{n} = \bm{0} \quad \text{on} \quad \Gamma_N \times (0, T_f].
\end{align}
We note that no explicit boundary condition is imposed on the intermediate variable $\xi$, while its initial condition is defined as $\xi(0) = \xi_0:= -\lambda \nabla\cdot\bm{u}_0 + \alpha \phi_0 + \beta \psi_0$.

\subsection{Weak formulation} 

We introduce the following function spaces:
\begin{align*}
    \bm{V} := [H^1_{\Gamma_D}(\Omega)]^d, \qquad
    W := L^2(\Omega), \qquad 
    Q = S := H^1_0(\Omega),
\end{align*}
where $H^1_{\Gamma_D}(\Omega)$ denotes the space of functions in $H^1(\Omega)$ with zero trace on $\Gamma_D$. We recall that the pair $(\bm{V}, W)$ satisfies the continuous inf-sup condition \cite{brenner1993nonconforming}: there exists a constant $C_{is} > 0$ such that
\begin{align}
    C_{is} \| w \|_{L^2(\Omega)}
    \le
	\sup_{\bm{0} \not= \bm{v}\in \bm{V}} \frac{  ( \nabla \cdot \bm{v}, w) }{\| \bm{v} \|_{H^1}}  \quad \forall w \in W. \label{infsupcon}
\end{align}
Furthermore, the coercivity of the elastic bilinear form is guaranteed by Korn's inequality \cite{nitsche1981korn}, which ensures the existence of a constant $C_K(\Omega,\Gamma_D) > 0$ such that 
\begin{align} 
    \| \bm{v} \|_{H^1} \leq C_K \| \varepsilon(\bm{v}) \|  \quad \forall \bm{v} \in \bm{V}.  \label{korn}
\end{align}

\begin{assumption}
    \label{assumption1}
    For the subsequent analysis, we make the following assumptions regarding the data and physical parameters:
    \begin{enumerate}
        \item \textbf{Isotropy:} The permeabilities (or conductivities) tensors are isotropic, such that $\mathbf{K} = K \mathbf{I}$ and $\mathbf{D} = D \mathbf{I}$. 
        
        \item \textbf{Coefficients:} The parameters $\alpha, \beta, \lambda, \mu, K$, and $D$ are strictly positive constants. The coefficients $c_1, c_2, b_0$, and $\gamma$ are non-negative constants, satisfying the structural constraints $c_1 - b_0 \ge 0$ and $c_2 - b_0 \ge 0$. 
        
        \item \textbf{Regularity:} The initial conditions are sufficiently regular, specifically $\bm{u}_0 \in \bm{V}$, $\phi_0 \in Q$ and $\psi_0 \in S$. The forcing terms are assumed to be sufficiently regular in time, with $\bm{f} \in H^1(0, T_f;[L^2(\Omega)]^d)$ and $g, h \in H^1(0, T_f; L^2(\Omega))$.
    \end{enumerate}
\end{assumption}

Multiplying the equations in system \eqref{fourfield} by test functions, applying integration by parts, and imposing the boundary conditions, we obtain the weak formulation: For almost every $t \in (0, T_f]$, find $(\bm{u}(t), \xi(t), \phi(t), \psi(t)) \in \bm{V} \times W \times Q \times S$ such that
\begin{subequations}
\label{weakform}
\begin{align}
2\mu(\varepsilon(\bm u),\varepsilon(\bm v))
- (\xi,\nabla\cdot\bm v)
& =  (\bm f,\bm v) \quad & \forall & \bm v \in \bm V, \label{wf:momentum}
\\
(\nabla\cdot\bm u, w)
+\tfrac{1}{\lambda} (\xi, w)
-\tfrac{\alpha}{\lambda} (\phi, w)
-\tfrac{\beta}{\lambda} (\psi, w)
& = 0 \quad & \forall & w \in W,  \label{wf:constraint}
\\
(c_1+\tfrac{\alpha^2}{\lambda})(\partial_t {\phi}, q)
+(\tfrac{\alpha\beta}{\lambda}-b_0)(\partial_t {\psi}, q)
-\tfrac{\alpha}{\lambda}(\partial_t {\xi}, q) \nonumber \\
+ K(\nabla\phi,\nabla q) + \gamma (\phi-\psi, q)
& = (g,q) \quad & \forall & q \in Q, \label{wf:phi}
\\
(c_2+\tfrac{\beta^2}{\lambda})(\partial_t {\psi}, s)
+(\tfrac{\alpha\beta}{\lambda}-b_0)(\partial_t {\phi}, s)
-\tfrac{\beta}{\lambda}(\partial_t {\xi}, s) \nonumber \\
+ D (\nabla\psi,\nabla s) + \gamma (\psi-\phi, s)
& = (h,s) \quad & \forall & s \in S. \label{wf:psi}
\end{align}
\end{subequations}

\begin{theorem}[Well-posedness]
\label{thm:wp}
Under Assumption \ref{assumption1},
System \eqref{weakform} admits a unique solution
\[
(\bm u,\xi,\phi,\psi)\in
H^1(0,T_f;\bm V)\times H^1(0,T_f;W)\times
H^1(0,T_f;Q)\times H^1(0,T_f;S).
\]
\end{theorem}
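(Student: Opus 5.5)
We will prove Theorem~\ref{thm:wp} by a Faedo--Galerkin argument: construct finite-dimensional approximations, derive uniform energy estimates, pass to the limit, and prove uniqueness by applying the same estimates to the difference of two solutions.

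\emph{Galerkin system.} Fix nested finite-dimensional subspaces $\bm V_m\subset\bm V$, $W_m\subset W$, $Q_m\subset Q$, $S_m\subset S$ whose unions are dense and such that $(\bm V_m,W_m)$ satisfies a discrete inf-sup condition uniformly in $m$. One admissible choice is to take $\bm V_m$ large enough relative to $W_m$, for instance by adding Fortin-type enrichments, so that \eqref{infsupcon} passes to the subspaces. Restrict \eqref{weakform} to these subspaces. By Korn's inequality \eqref{korn} and the discrete inf-sup condition, the saddle-point block \eqref{wf:momentum}--\eqref{wf:constraint} determines $(\bm u_m,\xi_m)$ as a linear function of $(\phi_m,\psi_m,\bm f)$. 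This is a perturbed saddle-point problem: coercive in $\bm u$, with the $\tfrac1\lambda(\xi,w)$ term only helping.

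Substituting this map into \eqref{wf:phi}--\eqref{wf:psi} gives a linear ODE system $\mathcal M\dot y+\mathcal A y=F(t)$ for the coefficient vector $y$ of $(\phi_m,\psi_m)$. The matrix $\mathcal M$ comes from the quadratic storage form
\[
\mathcal S(\phi,\psi,\xi)=c_1\|\phi\|^2-2b_0(\phi,\psi)+c_2\|\psi\|^2+\tfrac1\lambda\|\alpha\phi+\beta\psi-\xi\|^2 ,
\]
combined with the elastic energy produced when $\xi$ is eliminated. We need $\mathcal M$ to be invertible. It is positive semidefinite because $c_1,c_2\ge b_0$ gives
\[
c_1\|\phi\|^2-2b_0(\phi,\psi)+c_2\|\psi\|^2\ge b_0\|\phi-\psi\|^2\ge0 .
\]
Positive definiteness follows from the elastic energy: if $\mathcal S=0$ and the elastic energy vanishes, then $\xi=\alpha\phi+\beta\psi$ and $\bm u=0$, and \eqref{wf:momentum} then forces $\xi=0$. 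This still permits $\alpha\phi+\beta\psi=0$, so when $c_1=c_2=b_0=0$ some care is needed. In that degenerate case we either use the structure more carefully or add a vanishing regularization $\epsilon(\partial_t\phi,q)$ and let $\epsilon\to0$ at the end. Since the estimates below are independent of $\epsilon$, this costs nothing.

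\emph{Energy estimates.} This is the main step. Differentiate \eqref{wf:momentum}--\eqref{wf:constraint} in time and test the system with $(\partial_t\bm u_m,\xi_m,\phi_m,\psi_m)$; the coupling terms cancel. This yields
\[
\tfrac{d}{dt}\Big[\mu\|\varepsilon(\bm u_m)\|^2+\tfrac12\mathcal S\Big]+K\|\nabla\phi_m\|^2+D\|\nabla\psi_m\|^2+\gamma\|\phi_m-\psi_m\|^2=(\bm f,\partial_t\bm u_m)+(g,\phi_m)+(h,\psi_m).
\]
We handle $(\bm f,\partial_t\bm u_m)$ by integrating by parts in time, which is where $\bm f\in H^1(0,T;L^2)$ is used. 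Poincaré's inequality absorbs the $g,h$ terms, and Gronwall's inequality then bounds $\bm u_m$ in $L^\infty(\bm V)$, $\phi_m,\psi_m$ in $L^2(H^1_0)$, and $\xi_m$ in $L^\infty(L^2)$ by the inf-sup condition.

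For $H^1$-in-time regularity, test the undifferentiated transport equations with $(\partial_t\phi_m,\partial_t\psi_m)$ and the time-differentiated mechanics equations with $(\partial_t\bm u_m,\partial_t\xi_m)$. This gives
\[
\int_0^t\Big(2\mu\|\varepsilon(\partial_t\bm u_m)\|^2+\mathcal S(\partial_t\phi_m,\partial_t\psi_m,\partial_t\xi_m)\Big)+\tfrac K2\|\nabla\phi_m\|^2+\tfrac D2\|\nabla\psi_m\|^2+\tfrac\gamma2\|\phi_m-\psi_m\|^2
\]
bounded by the data, the initial values $\|\nabla\phi_0\|$ and $\|\nabla\psi_0\|$, and $\int\|\partial_t\bm f\|^2$. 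Here $\phi_0\in Q$ and $\psi_0\in S$ are used, and the initial projections must be chosen $H^1$-stable. The inf-sup condition then bounds $\partial_t\xi_m$ in $L^2(L^2)$.

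The \textbf{main obstacle} is $\partial_t\phi_m$ and $\partial_t\psi_m$ in $L^2(H^1)$, which the statement requires. The storage form only controls them in $L^2$, and only when $c_i>b_0$. We would obtain this bound by testing with $-\Delta_h\partial_t\phi_m$, or equivalently by differentiating the transport equations in time and testing with $(\partial_t\phi_m,\partial_t\psi_m)$. That route needs compatible initial data, namely that $\partial_t\phi(0)$ be bounded, which we would extract from the equations at $t=0$ using the regularity of $g,h,\bm f$ at $t=0$. If this fails in general, the fallback is to prove the theorem with $\phi,\psi\in L^\infty(H^1)$ and storage-weighted $\partial_t\phi,\partial_t\psi\in L^2(L^2)$.

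\emph{Passage to the limit and uniqueness.} With the uniform bounds in hand, extract weakly-$*$ convergent subsequences. The system is linear, so passing to the limit in the equations tested against fixed functions in $\bigcup_m$ of the subspaces is immediate, and density extends the equations to all test functions. The initial conditions are recovered because $H^1(0,T;X)\hookrightarrow C([0,T];X)$. For uniqueness, the difference of two solutions solves \eqref{weakform} with zero data and zero initial values, and the first energy identity gives $\bm u=0$ and $\phi=\psi=0$. The storage term vanishes as well, and then $\xi=0$ follows from the inf-sup condition \eqref{infsupcon} applied to \eqref{wf:momentum}.
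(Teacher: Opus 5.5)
Your route is essentially the paper's: a Galerkin approximation, the same two energy tests, inf-sup for $\xi$, weak limits, and uniqueness from the energy identity. The first test uses momentum with $\partial_t\bm u$, the time-differentiated constraint with $\xi$, and transport with $(\phi,\psi)$; the second uses time-differentiated mechanics with $(\partial_t\bm u,\partial_t\xi)$ and transport with $(\partial_t\phi,\partial_t\psi)$. Eliminating $(\bm u_m,\xi_m)$ to get an ODE in $(\phi_m,\psi_m)$ is a cleaner substitute for the paper's appeal to definiteness of $\mathbb{A}+\mathbb{B}$. Your requirement of a uniform discrete inf-sup on the Galerkin pairs also repairs a slip in the paper, which takes the supremum over all of $\bm V$ in a Galerkin momentum equation that holds only on $\bm V_j$. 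However, the proposal does not prove the stated theorem. The step you call the main obstacle stays open, and the closure you suggest does not work under Assumption~\ref{assumption1}.

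Differentiating the transport equations in time and testing with $(\partial_t\phi,\partial_t\psi)$ needs $\mathcal{E}(\partial_t\bm u(0),\partial_t\xi(0),\partial_t\phi(0),\partial_t\psi(0))<\infty$. Via the equations at $t=0$, this amounts to $\nabla\cdot(K\nabla\phi_0),\nabla\cdot(D\nabla\psi_0)\in L^2(\Omega)$. It also produces the term $(\partial_t\bm f,\partial_{tt}\bm u)$, which needs more time regularity of $\bm f$ than $H^1(0,T_f;[L^2(\Omega)]^d)$. This is not merely technical. Already for the heat equation, with Dirichlet eigenpairs $(\kappa_k,e_k)$ and $\phi_0=\sum_k a_ke_k$, one has $\int_0^{T_f}\|\nabla\partial_t\phi\|^2\,dt=\tfrac12\sum_k\kappa_k^2a_k^2(1-e^{-2\kappa_kT_f})$. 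This is infinite for $\phi_0\in H^1_0(\Omega)$ outside the domain of the Dirichlet Laplacian, so $\phi\in H^1(0,T_f;Q)$ should not be expected from $\phi_0\in Q$ alone.

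The paper does not close this either. Lemma~\ref{lem:priori} delivers exactly your fallback: $\bm u,\xi$ in $H^1$ in time, $\phi,\psi\in L^\infty(0,T_f;Q)$, and storage-weighted $\partial_t\phi,\partial_t\psi$ in $L^2(0,T_f;L^2(\Omega))$. The theorem then asserts more than that. What can actually be proved is your fallback, or the original claim under added compatibility and regularity hypotheses.

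Separately, the degenerate case $c_1=c_2=b_0=0$ does not come for free; Assumption~\ref{assumption1} allows it and the Barry--Mercer tests use it. Test \eqref{wf:phi} and \eqref{wf:psi} with the same $q$ and form $\beta\cdot$\eqref{wf:phi}$\,-\,\alpha\cdot$\eqref{wf:psi}. Every time derivative drops out, leaving the elliptic constraint $\beta K(\nabla\phi,\nabla q)-\alpha D(\nabla\psi,\nabla q)+(\alpha+\beta)\gamma(\phi-\psi,q)=(\beta g-\alpha h,q)$ for all $t$. So $\phi_0$ and $\psi_0$ cannot both be prescribed freely. Your $\epsilon$-regularized estimates control only $\alpha\partial_t\phi+\beta\partial_t\psi$ uniformly, so the individual initial conditions are lost in the limit $\epsilon\to0$. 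The paper's DAE argument has the same consistency problem.
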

The proof of this result relies on a Galerkin approximation combined with a priori energy estimates and arguments from the theory of differential algebraic equations (DAE). Related analytical techniques for coupled systems of similar structure can be found in \cite{phillips2008coupling, yi2014convergence, brun2019well, ahmed2019adaptive}. 

First, we construct the finite-dimensional approximations as follows.
Let $\{\bm v_i\}_{i\in\mathbb N}$, $\{w_i\}_{i\in\mathbb N}$,
$\{q_i\}_{i\in\mathbb N}$, and $\{s_i\}_{i\in\mathbb N}$
be Hilbert bases of the spaces $\bm V$, $W$, $Q$, and $S$, respectively.
For fixed strictly positive integers $j,k,l,$ and $m$, we define the finite-dimensional subspaces
\begin{align*}
    \bm{V}_j &:= \mathrm{span}\{\bm{v}_1,\dots,\bm{v}_j\} \subset \bm{V}, &
    W_k &:= \mathrm{span}\{w_1,\dots,w_k\} \subset W, \\
    Q_l &:= \mathrm{span}\{q_1,\dots,q_l\} \subset Q, &
    S_m &:= \mathrm{span}\{s_1,\dots,s_m\} \subset S.
\end{align*}
Let $(\bm{u}_j, \xi_k, \phi_l, \psi_m) : (0, T_f] \to \bm{V}_j \times W_k \times Q_l \times S_m$ denote the solution satisfying the following variational system for almost every $t \in (0, T_f]$:

\begin{subequations} 
\label{eq:discrete_weak_form} 
\begin{align} 
2\mu(\varepsilon(\bm{u}_j), \varepsilon(\bm{v}_i)) - (\xi_k, \nabla\cdot\bm{v}_i) &= (\bm{f}, \bm{v}_i) && \forall i=1,\dots,j, 
\\ 
(\nabla\cdot\bm{u}_j, w_i) + \tfrac{1}{\lambda}(\xi_k, w_i) - \tfrac{\alpha}{\lambda}(\phi_l, w_i) - \tfrac{\beta}{\lambda}(\psi_m, w_i) &= 0 && \forall i=1,\dots,k, 
\\
(c_1+\tfrac{\alpha^2}{\lambda})(\partial_t {\phi}_l, q_i) + (\tfrac{\alpha\beta}{\lambda}-b_0)(\partial_t {\psi}_m, q_i) - \tfrac{\alpha}{\lambda}(\partial_t {\xi}_k &, q_i) \nonumber \\
+ K ( \nabla\phi_l, \nabla q_i) + \gamma(\phi_l-\psi_m, q_i) &= (g, q_i) && \forall i=1,\dots,l, 
\\ 
(c_2+\tfrac{\beta^2}{\lambda})(\partial_t {\psi}_m, s_i) + (\tfrac{\alpha\beta}{\lambda}-b_0)(\partial_t {\phi}_l, s_i) - \tfrac{\beta}{\lambda}(\partial_t {\xi}_k &, s_i) \nonumber \\
+ D ( \nabla\psi_m, \nabla s_i) + \gamma(\psi_m-\phi_l, s_i) &= (h, s_i) && \forall i=1,\dots,m,
\end{align} 
\end{subequations} 
with
\begin{align*}
    & (\bm{u}_j(0),\bm{v}_i) = \bm{u}_0 \quad \forall i=1,\dots,j, & \quad & (\xi_k(0),w_i) = \xi_0 \quad \forall i=1,\dots,k, \\
    & (\phi_l(0),q_i) = \phi_0 \quad \forall i=1,\dots,l, & \quad & (\psi_m(0),s_i) = \psi_0 \quad \forall i=1,\dots,m.
\end{align*}
We define the following block matrices corresponding to the bilinear forms:
\begin{align*}
& (\bm{A}_{uu})_{\iota\jmath} := 2\mu(\varepsilon(\bm{v}_\iota),\varepsilon(\bm{v}_\jmath)), \quad 
(\bm{B}_{u \xi})_{\iota\jmath} := (\nabla\cdot\bm{v}_\iota, w_\jmath), 
\\
& (\bm{C}_{\xi\xi})_{\iota\jmath} := \tfrac{1}{\lambda}(w_\iota, w_\jmath),  \quad (\bm{C}_{\xi\phi})_{\iota\jmath} := -\tfrac{\alpha}{\lambda}(w_\iota, q_\jmath), \quad
(\bm{C}_{\xi\psi})_{\iota\jmath} := -\tfrac{\beta}{\lambda}(w_\iota, s_\jmath), 
\\
& (\bm{\Theta}_{\phi\phi})_{\iota\jmath} := K(\nabla q_\iota, \nabla q_\jmath) + \gamma(q_\iota, q_\jmath),  \quad
(\bm{\Theta}_{\psi\psi})_{\iota\jmath} := D(\nabla s_\iota, \nabla s_\jmath) + \gamma(s_\iota, s_\jmath), 
\\
& (\bm{\Theta}_{\phi\psi})_{\iota\jmath} := -\gamma(q_\iota, s_\jmath), \quad
(\bm{M}_{\phi\phi})_{\iota\jmath} := (c_1 + \tfrac{\alpha^2}{\lambda})(q_\iota, q_\jmath), \quad
(\bm{M}_{\psi\psi})_{\iota\jmath} := (c_2 + \tfrac{\beta^2}{\lambda})(s_\iota,s_\jmath), \quad
\\
& (\bm{M}_{\phi\psi})_{\iota\jmath} := (\tfrac{\alpha\beta}{\lambda} - b_0)(q_\iota, s_\jmath), \quad
(\bm{M}_{\phi\xi})_{\iota\jmath} := -\tfrac{\alpha}{\lambda}(q_\iota,w_\jmath), \quad
(\bm{M}_{\psi\xi})_{\iota\jmath} := -\tfrac{\beta}{\lambda}(s_\iota,w_\jmath). \quad
\end{align*}
Let the load vectors be defined by:
\[(\bm{F})_\iota := (\bm{f}, \bm{v}_\iota),\quad (\bm{G})_\iota := (g, q_\iota),\quad (\bm{H})_\iota := (h, s_\iota). \]
We also introduce the coefficient vectors
$\bm U_j(t)$, $\bm\Xi_k(t)$, $\bm\Phi_l(t)$, and $\bm\Psi_m(t)$,
which denote the degrees of freedom of
$\bm u_j(t)$, $\xi_k(t)$, $\phi_l(t)$, and $\psi_m(t)$
with respect to the basis sets
$\{\bm v_i\}_{i=1}^j$, $\{w_i\}_{i=1}^k$, $\{q_i\}_{i=1}^l$, and $\{s_i\}_{i=1}^m$, respectively.  Using this notation, the problem \eqref{eq:discrete_weak_form} is equivalent to the following DAE system:
\begin{equation} \label{eq:DAE_system}
\mathbb{A} \tfrac{d}{dt} {\mathbf{X}}(t) + \mathbb{B}\mathbf{X}(t) = \mathbb{F}(t),
\end{equation}
where $\mathbf{X}(t) := [\mathbf{U}_j(t), \bm{\Xi}_k(t), \bm{\Phi}_l(t), \bm{\Psi}_m(t)]^T$, $\mathbb{F}(t) := [\bm{F}(t), \bm{0}, \bm{G}(t), \bm{H}(t)]^T$, and
\[
\mathbb{A} :=
\begin{pmatrix}
\bm{0} & \bm{0} & \bm{0} & \bm{0} \\
\bm{0} & \bm{0} & \bm{0} & \bm{0} \\
\bm{0} & \bm{M}_{\phi\xi} & \bm{M}_{\phi\phi} & \bm{M}_{\phi\psi} \\
\bm{0} & \bm{M}_{\psi\xi} & \bm{M}_{\phi\psi}^T & \bm{M}_{\psi\psi}
\end{pmatrix},
\quad
\mathbb{B} :=
\begin{pmatrix}
\bm{A}_{uu} & -\bm{B}_{u \xi} & \bm{0} & \bm{0} \\
\bm{B}_{u \xi}^T & \bm{C}_{\xi\xi} & \bm{C}_{\xi\phi} & \bm{C}_{\xi\psi} \\
\bm{0} & \bm{0} & \bm{\Theta}_{\phi\phi} & \bm{\Theta}_{\phi\psi} \\
\bm{0} & \bm{0} & \bm{\Theta}_{\phi\psi}^T & \bm{\Theta}_{\psi\psi}
\end{pmatrix}.
\]
To establish the well-posedness of the linear DAE system \eqref{eq:DAE_system}, it suffices to demonstrate that the linear combination $s\mathbb{A} + \mathbb{B}$ is non-singular for some $s \not= 0$. The following lemma verifies this condition for the case $s=1$. 

To simplify the subsequent analysis, we introduce two functionals $\mathcal{E}$ and $\mathcal{V}$ acting on generic functions $(\bm v, w, q, s)$:
\begin{align}
\mathcal{E}(\bm v, w, q, s) &:= 2 \mu\|\varepsilon(\bm v)\|^2 + \tfrac{1}{\lambda}\| w - \alpha q - \beta s \|^2 
\nonumber \\ 
& \quad \  + (c_1-b_0) \|q\|^2
+ (c_2-b_0)\|s\|^2 + b_0 \| q - s\|^2, \label{def:E_func} \\
\mathcal{V}(q, s) & := K\|\nabla q\|^2 + D\|\nabla s\|^2 + \gamma\|q-s\|^2. \label{def:V_func}
\end{align}

\begin{lemma}
\label{thm1s}
The system matrix $\mathbb{A} + \mathbb{B}$ is positive definite and, therefore, invertible. For any $(j, k, l, m) \in \mathbb{N}^4$, the linear DAE system \eqref{eq:DAE_system} admits a unique solution.
\end{lemma}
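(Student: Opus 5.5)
The plan is to compute the quadratic form $\mathbf{X}^T(\mathbb{A}+\mathbb{B})\mathbf{X}$ directly and identify it with the functionals $\mathcal{E}$ and $\mathcal{V}$ from \eqref{def:E_func}--\eqref{def:V_func}. Since $\mathbb{A}+\mathbb{B}$ is not symmetric, ``positive definite'' is understood as $\mathbf{X}^T(\mathbb{A}+\mathbb{B})\mathbf{X}>0$ for $\mathbf{X}\neq \bm 0$. Fix a coefficient vector $\mathbf{X}=[\mathbf U,\bm\Xi,\bm\Phi,\bm\Psi]^T$ and let $(\bm v,w,q,s)\in \bm V_j\times W_k\times Q_l\times S_m$ be the associated functions.

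From the block definitions,
\[
\mathbf{X}^T\mathbb{B}\mathbf{X} = 2\mu\|\varepsilon(\bm v)\|^2 - (w,\nabla\cdot\bm v) + (\nabla\cdot\bm v,w) + \tfrac1\lambda\|w\|^2 - \tfrac{\alpha}{\lambda}(w,q) - \tfrac{\beta}{\lambda}(w,s) + \mathcal{V}(q,s).
\]
The two divergence terms cancel because the $(1,2)$ block is $-\bm B_{u\xi}$ and the $(2,1)$ block is $\bm B_{u\xi}^T$. Likewise,
\[
\mathbf{X}^T\mathbb{A}\mathbf{X} = (c_1+\tfrac{\alpha^2}{\lambda})\|q\|^2+(c_2+\tfrac{\beta^2}{\lambda})\|s\|^2+2(\tfrac{\alpha\beta}{\lambda}-b_0)(q,s)-\tfrac{\alpha}{\lambda}(q,w)-\tfrac{\beta}{\lambda}(s,w).
\]
Adding the two expressions, the terms scaled by $1/\lambda$ combine into $\tfrac1\lambda\|w-\alpha q-\beta s\|^2$. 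The remaining storage terms satisfy
\[
c_1\|q\|^2+c_2\|s\|^2-2b_0(q,s) = (c_1-b_0)\|q\|^2+(c_2-b_0)\|s\|^2+b_0\|q-s\|^2.
\]
Hence
\[
\mathbf{X}^T(\mathbb{A}+\mathbb{B})\mathbf{X}=\mathcal{E}(\bm v,w,q,s)+\mathcal{V}(q,s),
\]
and every term is nonnegative by Assumption \ref{assumption1}, in particular $c_1-b_0\ge0$ and $c_2-b_0\ge0$.

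For definiteness, suppose the form vanishes. Then $\|\varepsilon(\bm v)\|=0$, so Korn's inequality \eqref{korn} gives $\bm v=\bm 0$. Also $K\|\nabla q\|^2=D\|\nabla s\|^2=0$ with $K,D>0$, so Poincaré's inequality on $H^1_0(\Omega)$ gives $q=s=0$. Finally $\tfrac1\lambda\|w-\alpha q-\beta s\|^2=0$ forces $w=0$. Linear independence of the basis functions then yields $\mathbf{X}=\bm 0$. Thus $\mathbb{A}+\mathbb{B}$ is positive definite, and therefore nonsingular, since $(\mathbb{A}+\mathbb{B})\mathbf{X}=\bm 0$ implies $\mathbf{X}^T(\mathbb{A}+\mathbb{B})\mathbf{X}=0$.

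For the DAE statement, I would invoke the standard theory of linear constant-coefficient DAEs. Nonsingularity of $s\mathbb{A}+\mathbb{B}$ for $s=1$ means the matrix pencil is regular. The Weierstrass--Kronecker canonical form then gives a unique solution for given consistent initial data and a forcing $\mathbb{F}\in H^1$, which is available by Assumption \ref{assumption1}.

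The main obstacle, in my view, is justifying consistency of the initial data and controlling the index, so that only $\mathbb{F}$ and not higher derivatives of it is needed. I would handle this concretely by eliminating the algebraic variables. The $(\bm u,\xi)$ block $\begin{pmatrix}\bm A_{uu}&-\bm B_{u\xi}\\ \bm B_{u\xi}^T&\bm C_{\xi\xi}\end{pmatrix}$ is positive definite by the same computation with $q=s=0$. So the first two rows express $(\mathbf U,\bm\Xi)$ as an affine function of $(\bm\Phi,\bm\Psi,\bm F)$, and in particular $\bm\Xi=\mathbb{L}_1(\bm\Phi,\bm\Psi)+\mathbb{L}_2\bm F$ for some matrices $\mathbb{L}_1,\mathbb{L}_2$.

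Substituting into the last two rows gives an ODE in $(\bm\Phi,\bm\Psi)$ whose mass matrix is the $\mathbb{A}$-part restricted to the reduced variables. Its quadratic form equals $\tfrac1\lambda\|w-\alpha q-\beta s\|^2+(c_1-b_0)\|q\|^2+(c_2-b_0)\|s\|^2+b_0\|q-s\|^2$, where $w$ is the total pressure induced by $(q,s)$ through the constraint. I expect this to remain positive definite using the elasticity contribution, since $\mathbf{X}^T\mathbb{B}\mathbf{X}$ on the constraint manifold supplies $2\mu\|\varepsilon(\bm v)\|^2$ (this is the delicate point).

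If this positivity holds, the reduced system is a regular ODE (index one). Carathéodory/Picard theory then gives a unique solution, with $(\mathbf U,\bm\Xi)$ recovered algebraically, so their initial values are consistently determined by $(\bm\Phi(0),\bm\Psi(0),\bm F(0))$.

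If positivity fails in degenerate cases such as $c_1=c_2=b_0=0$, I would fall back on the regular-pencil argument alone. That argument still yields uniqueness and existence for consistent data, with the consistent data defined through the algebraic equations.
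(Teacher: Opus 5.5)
Your proof is correct and follows the paper's. The paper also writes $\mathbf{X}^T(\mathbb{A}+\mathbb{B})\mathbf{X}=\mathcal{E}(\bm v,w,q,s)+\mathcal{V}(q,s)$, proves definiteness through Korn and Poincaré, and then appeals to the regularity of the pencil $s\mathbb{A}+\mathbb{B}$; you are in fact more careful than the paper about consistent initial data.

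Your index-one reduction goes beyond what the paper attempts, and as written it is slightly off. The reduced mass form equals $\mathcal{E}$ evaluated at the induced $(\bm v,w)$, so it does contain $2\mu\|\varepsilon(\bm v)\|^2$. It still degenerates when $c_1=c_2=b_0=0$: any $(q,s)\neq 0$ with $\alpha q+\beta s=0$ induces $(\bm v,w)=(\bm 0,0)$. In general, then, the pencil argument you fall back on is the one doing the work.
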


\begin{proof}
Let $\bm{Z} := [\bm{v}, w, q, s]^T$ denote an arbitrary vector representing the degrees of freedom in the finite-dimensional space $\bm{V}_j \times W_k \times Q_l \times S_m$. We consider the quadratic form associated with the matrix $\mathbb{M}= \mathbb{A} + \mathbb{B}$, defined by $\mathcal{M}(\bm{Z}) := \bm{Z}^T \mathbb{M} \bm{Z}$. To prove invertibility, it suffices to show that $\mathbb{M}$ is strictly positive definite, i.e., $\mathcal{M}(\bm{Z}) > 0$ for all $\bm{Z} \neq \bm{0}$.

Substituting the block definitions of $\mathbb{A}$ and $\mathbb{B}$, and observing that the terms involving the divergence operator cancel out, we obtain the expansion:
\begin{align}
\mathcal{M}(\bm{Z}) = & \  2\mu\|\varepsilon(\bm{v})\|^2 + \tfrac{1}{\lambda}\|w\|^2 + (c_1+\tfrac{\alpha^2}{\lambda})\|q\|^2 + (c_2+\tfrac{\beta^2}{\lambda})\|s\|^2
\nonumber \\ & - \tfrac{2}{\lambda}(w, \alpha q+\beta s) + 2(\tfrac{\alpha\beta}{\lambda}-b_0)(q, s) + K\|\nabla q\|^2 + D\|\nabla s\|^2 + \gamma\|q-s\|^2.
\label{eq:expansion_raw}
\end{align}
To establish coercivity, we employ the following algebraic identities to regroup as follows:
\begin{align*}
    \tfrac{1}{\lambda} \| w - \alpha q - \beta s\|^2 & =  \tfrac{1}{\lambda} \|\xi\|^2 + \tfrac{\alpha^2}{\lambda}\|q\|^2 + \tfrac{\beta^2}{\lambda}\|s\|^2 - \tfrac{2}{\lambda}(w, \alpha q+\beta s) + \tfrac{2\alpha\beta}{\lambda}(q, s),  
    \\
    b_0\|q - s\|^2 & = b_0 \|q\|^2 + b_0\|s\|^2 - 2b_0 (q , s). 
\end{align*}
Substituting these identities back into \eqref{eq:expansion_raw}, the expression simplifies to: 
\begin{align} 
\mathcal{M}(\bm{Z}) = \mathcal{E}(\bm v, w, q, s) + \mathcal{V}(q, s). \label{eq:matrix_coercivity}
\end{align}
By Assumption \ref{assumption1}, all physical parameters are non-negative, the coefficients satisfy $c_1 \ge b_0$ and $c_2 \ge b_0$. Thus, $\mathcal{M}(\bm{Z}) \ge 0$. Now, assume $\mathcal{M}(\bm{Z}) = 0$ in \eqref{eq:matrix_coercivity}. Since $\mathcal{E}$ and $\mathcal{V}$ are sums of non-negative terms, each must vanish individually:
From $\mathcal{V}(q,s) = 0$, we deduce $K\|\nabla q\|^2 = 0$ and $D\|\nabla s\|^2 = 0$. Applying the Poincaré inequality yields $q = 0$ and $s = 0$.
From $\mathcal{E}(\bm v,w,q,s) = 0$, we deduce $2\mu\|\varepsilon(\bm v)\|^2 = 0$. Applying Korn's inequality yields $\bm v = \bm 0$. Furthermore, checking the term $\tfrac{1}{\lambda} \| w - \alpha q - \beta s\|^2 = 0$ with $q = s = 0$ gives $w = 0$.
Therefore, $\mathcal{M}(\bm{Z}) = 0$ implies $\bm{Z} = \bm{0}$. The matrix $\mathbb{M} = \mathbb{A} + \mathbb{B}$ is strictly positive definite and invertible.
\end{proof}

With the well-posedness of the discrete system established, we now aim to derive uniform a priori estimates for the solution. These bounds are independent of the discretization parameters $j, k, l,$ and $m$, which is essential for passing to the limit. For the sake of brevity in the following analysis, we drop the discretization subscripts and denote the discrete solution $(\bm{u}_j, \xi_k, \phi_l, \psi_m)$ simply as $(\bm{u}, \xi, \phi, \psi)$. In the following estimates, we use the notation $X \lesssim Y$ to denote $X \leq C Y$, where $C > 0$ is a constant independent of the indices $(j, k, l, m)$.

\begin{lemma}
\label{lem:priori}
Let $(\bm{u}, \xi, \phi, \psi)$ be the solution to the discrete system \eqref{eq:discrete_weak_form}. The following stability estimates hold:
\begin{align}
     & \mu \| \bm u \|_{L^\infty(0,T_f;\bm{V})}^2 + K \| \phi \|_{L^2(0,T_f;Q)}^2 + D \| \psi \|_{L^2(0,T_f;S)}^2 
     \nonumber \\
     & \lesssim 
     \mathcal{E}(\bm u_0, \xi_0, \phi_0, \psi_0) +  \|\bm f\|_{H^1(0,T_f;[L^2(\Omega)]^d)}^2 + \|g\|_{L^2(0,T_f;L^2(\Omega))}^2 + \|h\|_{L^2(0,T_f;L^2(\Omega))}^2 , \label{est:stability} 
     \\
     & \mu \| \bm u \|_{H^1(0,T_f;\bm V)}^2 + K \| \phi \|_{L^\infty(0,T_f;Q)}^2 + D \| \psi \|_{L^\infty(0,T_f;S)}^2 \nonumber \\
     & \lesssim
    \mathcal{V}(\phi_0, \psi_0) + \|\bm f\|_{H^1(0,T_f;[L^2(\Omega)]^d)}^2 + \|g\|_{H^1(0,T_f;L^2(\Omega))}^2 + \|h\|_{H^1(0,T_f;L^2(\Omega))}^2 . \label{est:stability2} 
\end{align}
Furthermore, the variable $\xi$ satisfies:
\begin{align}
C_{is} \| \xi \|_{L^\infty(0,T_f;W)}  & \lesssim \mu \| \bm  u \|_{L^\infty(0,T_f;\bm{V})} + C_p \|\bm f\|_{L^\infty(0,T_f;L^2(\Omega))}, 
\label{est:stabilityA}
\\
C_{is} \| \xi \|_{H^1(0,T_f;W)}  & \lesssim \mu \|  \bm  u \|_{H^1(0,T_f;\bm V)} + C_p \| \bm f\|_{H^1(0,T_f;L^2(\Omega))}.
\label{est:stability2A}
\end{align}
\end{lemma}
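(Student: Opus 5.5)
We derive both estimates by energy arguments on the discrete system \eqref{eq:discrete_weak_form}, using the structure already identified in the proof of Lemma \ref{thm1s}. There, $\mathbb{A}+\mathbb{B}$ produced $\mathcal{E}+\mathcal{V}$. Testing with time derivatives should therefore produce $\tfrac12\tfrac{d}{dt}\mathcal{E}+\mathcal{V}$, and testing differently should produce $\mathcal{E}(\partial_t\cdot)+\tfrac12\tfrac{d}{dt}\mathcal{V}$.

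\textbf{First estimate \eqref{est:stability}.}
1. Differentiate the constraint equation in time; this is legitimate because the DAE solution is smooth in finite dimensions.
2. Test the momentum equation with $\partial_t\bm u$, the differentiated constraint with $\xi$, the $\phi$-equation with $\phi$, and the $\psi$-equation with $\psi$, then add.
3. The terms $(\xi,\nabla\cdot\partial_t\bm u)$ cancel. The storage terms recombine, exactly as in the algebraic identities of Lemma \ref{thm1s}, into $\tfrac12\tfrac{d}{dt}\mathcal{E}(\bm u,\xi,\phi,\psi)$. This yields
\[
\tfrac12\tfrac{d}{dt}\mathcal{E}+\mathcal{V}(\phi,\psi)=(\bm f,\partial_t\bm u)+(g,\phi)+(h,\psi).
\]
4. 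Integrate over $(0,t)$. Treat $\int_0^t(\bm f,\partial_t\bm u)$ by integration by parts in time:
\[
(\bm f(t),\bm u(t))-(\bm f(0),\bm u_0)-\int_0^t(\partial_t\bm f,\bm u).
\]
5. Bound these terms with Korn's inequality \eqref{korn}, Young's inequality, and the trace-in-time bound $\|\bm f\|_{L^\infty L^2}\lesssim\|\bm f\|_{H^1L^2}$. Absorb $\mu\|\varepsilon(\bm u(t))\|^2$ into the left side.
6. Handle $(g,\phi)$ by Poincaré and Young, absorbing $K\|\nabla\phi\|^2$; treat $(h,\psi)$ the same way.
7. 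The leftover $\int_0^t\|\varepsilon(\bm u)\|^2$ is closed by Gronwall, after taking the supremum over $t$.

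\textbf{Second estimate \eqref{est:stability2}.}
1. Differentiate the momentum equation in time and test it with $\partial_t\bm u$.
2. Test the differentiated constraint with $\partial_t\xi$, the $\phi$-equation with $\partial_t\phi$, and the $\psi$-equation with $\partial_t\psi$.
3. The same cancellation gives
\[
\mathcal{E}(\partial_t\bm u,\partial_t\xi,\partial_t\phi,\partial_t\psi)+\tfrac12\tfrac{d}{dt}\mathcal{V}(\phi,\psi)=(\partial_t\bm f,\partial_t\bm u)+(g,\partial_t\phi)+(h,\partial_t\psi).
\]
4. The right-hand side is where I expect the main difficulty. The bound carries $H^1$-in-time norms of $g,h$, so integrate by parts in time:
\[
\int_0^t(g,\partial_t\phi)=(g(t),\phi(t))-(g(0),\phi_0)-\int_0^t(\partial_t g,\phi).
\]
5. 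Bound the endpoint terms via Poincaré, $\epsilon K\|\nabla\phi(t)\|^2$, and the $L^\infty$-in-time trace of $g$. Then close with Gronwall on $\mathcal{V}$.
6. If $c_1=b_0$, the storage part of $\mathcal{E}$ gives no $L^2$ control of $\partial_t\phi$, so the integration by parts in step 4 is genuinely required rather than a convenience.
7. The term $(\partial_t\bm f,\partial_t\bm u)$ is absorbed into $2\mu\|\varepsilon(\partial_t\bm u)\|^2$ with Korn's inequality.
8. Combining the resulting bound on $\mu\|\partial_t\bm u\|_{L^2\bm V}^2$ with the $\bm u(0)$ data gives the $H^1(0,T_f;\bm V)$ norm. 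Here $\|\bm u_0\|$ is controlled through the initial data, so I expect to need $\mathcal{V}(\phi_0,\psi_0)$ plus the first estimate.

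\textbf{Estimates for $\xi$, \eqref{est:stabilityA} and \eqref{est:stability2A}.}
1. Apply the inf-sup condition \eqref{infsupcon} to the momentum equation at each $t$:
\[
C_{is}\|\xi\|\le\sup_{\bm v}\frac{2\mu(\varepsilon(\bm u),\varepsilon(\bm v))-(\bm f,\bm v)}{\|\bm v\|_{H^1}}\lesssim\mu\|\bm u\|_{H^1}+C_p\|\bm f\|.
\]
2. Take the supremum in time to obtain \eqref{est:stabilityA}.
3. Apply the same argument to the time-differentiated momentum equation and integrate its square over time to obtain \eqref{est:stability2A}.
4. At the discrete level the inf-sup condition must hold on $\bm V_j\times W_k$. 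I would assume the Galerkin bases are chosen compatibly, for example with $W_k\subset\nabla\cdot\bm V_j$ or a Fortin-type choice, as the paper implicitly does.
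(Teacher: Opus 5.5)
Your proposal is correct and follows the paper's proof essentially step for step. It uses the same two choices of test functions, yielding $\tfrac12\tfrac{d}{dt}\mathcal{E}+\mathcal{V}$ and $\mathcal{E}(\partial_t\cdot)+\tfrac12\tfrac{d}{dt}\mathcal{V}$, integration by parts in time on the $\bm f$ term (first estimate) and on the $g,h$ terms (second estimate), absorption plus Gronwall, and the inf-sup argument for $\xi$ and $\partial_t\xi$.

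The two points you flag are also glossed over in the paper. The paper applies the continuous inf-sup \eqref{infsupcon} although the Galerkin momentum equation holds only for $\bm v\in\bm V_j$. It also never bounds the $L^2(0,T_f;\bm V)$ part of $\|\bm u\|_{H^1(0,T_f;\bm V)}$. For that second point you do not need the first estimate, which would bring $\mathcal{E}(\bm u_0,\xi_0,\phi_0,\psi_0)$ onto the right-hand side of \eqref{est:stability2}. Testing the two mechanics equations at fixed $t$ with $(\bm u,\xi)$ gives $\mu\|\varepsilon(\bm u(t))\|^2\lesssim\|\bm f(t)\|^2+\|\nabla\phi(t)\|^2+\|\nabla\psi(t)\|^2$, which is controlled by the $\sup_t\mathcal{V}(\phi(t),\psi(t))$ bound from your second energy argument.
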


\begin{proof}
Differentiating equation \eqref{wf:constraint} with respect to time yields:
\begin{align}
    (\nabla\cdot\partial_t {\bm u},w) + \tfrac{1}{\lambda}(\partial_t \xi,w) - \tfrac{\alpha}{\lambda}(\partial_t \phi,w) - \tfrac{\beta}{\lambda}(\partial_t \psi,w) & = 0. \label{dtw2}
\end{align}
We verify the identity by choosing the test functions $\bm{v} = \partial_t {\bm{u}}$ in \eqref{wf:momentum}, $w = \xi$ in \eqref{dtw2}, $q = \phi$ in \eqref{wf:phi}, and $s = \psi$ in \eqref{wf:psi}. Summing the resulting equations and eliminating the coupling terms involving the divergence operator leads to:
\begin{align}
    & 2 \mu (\varepsilon(\bm u), \varepsilon( \partial_t\bm u) ) + \tfrac{1}{\lambda} (\partial_t \xi , \xi ) + (c_1+\tfrac{\alpha^2}{\lambda} ) (\partial_t \phi, \phi) + (c_2+\tfrac{\beta^2}{\lambda} ) (\partial_t \psi, \psi ) + (\tfrac{\alpha\beta}{\lambda} - b_0)(\partial_t \psi, \phi) \nonumber \\
    & + (\tfrac{\alpha\beta}{\lambda} - b_0)(\partial_t \phi, \psi) - \tfrac{\alpha}{\lambda}(\partial_t \phi, \xi) - \tfrac{\beta}{\lambda}(\partial_t \psi, \xi) - \tfrac{\alpha}{\lambda}(\partial_t \xi, \phi) - \tfrac{\beta}{\lambda}(\partial_t \xi, \psi)  \nonumber \\ 
    & + K\|\nabla\phi\|^2 + D\|\nabla\psi\|^2 + \gamma\|\phi-\psi\|^2 
     = (\bm f,\partial_t {\bm u}) + (g,\phi) + (h,\psi). \label{lem1:eq1}
\end{align}
We observe that the cross terms in \eqref{lem1:eq1} can be regrouped as follows:
\begin{align}
    & \tfrac{1}{\lambda}(\partial_t\xi - \alpha\partial_t\phi - \beta\partial_t\psi, \xi - \alpha \phi - \beta \psi) =  \tfrac{1}{\lambda} (\partial_t \xi , \xi ) + \tfrac{\alpha^2}{\lambda}(\partial_t \phi, \phi) + \tfrac{\beta^2}{\lambda}(\partial_t \psi, \psi ) \nonumber \\
    & \quad + \tfrac{\alpha \beta}{\lambda} (\partial_t \psi, \phi) + \tfrac{\alpha \beta}{\lambda}  (\partial_t \phi, \psi) - \tfrac{\alpha }{\lambda} (\partial_t \phi, \xi) - \tfrac{\beta}{\lambda} (\partial_t \psi, \xi) - \tfrac{\alpha}{\lambda} (\partial_t \xi, \phi) - \tfrac{ \beta}{\lambda} (\partial_t \xi, \psi),  \label{lem1:eq2} \\
    & b_0 (\partial_t \phi - \partial_t \psi, \phi - \psi) = b_0 (\partial_t \phi , \phi) + b_0(\partial_t \psi , \psi) - b_0 (\partial_t \phi , \psi) - b_0 (\partial_t \psi , \phi). \label{lem1:eq3}
\end{align}
Applying the identities \eqref{lem1:eq2} and \eqref{lem1:eq3} to \eqref{lem1:eq1}, along with $( \partial_t {x}, x ) = \frac{1}{2}\frac{d}{dt}\|x\|^2$, we arrive at:
\begin{align}
    & \tfrac{1}{2} \tfrac{d}{dt} \mathcal{E}(\bm u, \xi, \phi, \psi)
    + \mathcal{V}( \phi, \psi)
     = (\bm f,\partial_t {\bm u}) + (g,\phi) + (h,\psi).
\end{align}
Integrating this differential equality over $(0, \tau)$ yields the energy identity: 
\begin{align} 
& \tfrac{1}{2} \mathcal{E}(\bm u(\tau), \xi(\tau), \phi(\tau), \psi(\tau)) + \int_0^\tau \mathcal{V}( \phi, \psi) \, ds 
\nonumber \\ 
& = \tfrac{1}{2} \mathcal{E}(\bm u_0, \xi_0, \phi_0, \psi_0) + \int_0^\tau \big[ (\bm f, \partial_t {\bm u}) + (g, \phi) + (h, \psi) \big ] \, ds. 
\label{eq:energy_identity} 
\end{align}

We start by estimating the terms on the right-hand side of the identity \eqref{eq:energy_identity}. Applying integration by parts in time to the term $\int_0^\tau (\bm f, \partial_t \bm u) ds$, followed by the Cauchy-Schwarz, Young's, and Korn's inequalities, we obtain:
\begin{align} 
    & \int_0^\tau (\bm f, \partial_t \bm u) \, ds  = (\bm f(\tau), \bm u(\tau)) - (\bm f(0), \bm u(0)) -  \int_0^\tau (\partial_t \bm f, \bm u) \, ds  \nonumber \\ 
    & \leq C \| \bm f \|_{H^1(0,T_f;[L^2(\Omega)]^d)}^2 + \tfrac{\mu}{2} \| \varepsilon(\bm u(\tau)) \|^2 + \tfrac{\mu}{2}\|\varepsilon(\bm u_0)\|^2 + \int_0^\tau \tfrac{\mu}{2} \| \varepsilon (\bm u) \|^2 \, ds. \label{thm2:ieq1}
\end{align}
Similarly, applying the Cauchy-Schwarz, Young's, and Poincaré inequalities yields:
\begin{align*}
    \int_0^\tau (g,\phi) \, ds \leq C \|g\|_{L^2(0,T_f;L^2(\Omega))}^2 + \frac{K}{2}\int_0^\tau \| \nabla \phi \|^2 \, ds, \\
    \int_0^\tau (h,\psi) \, ds \leq C \|h\|_{L^2(0,T_f;L^2(\Omega))}^2 + \frac{D}{2}\int_0^\tau \| \nabla \psi \|^2 \, ds.
\end{align*}
Absorbing the terms $\tfrac{\mu}{2}\|\varepsilon(\bm u(\tau))\|^2$, $\tfrac{K}{2}\int_0^\tau\|\nabla \phi\|^2\, ds$, and $\tfrac{D}{2}\int_0^\tau\|\nabla \psi\|^2\, ds$ into the left-hand side of \eqref{eq:energy_identity}, incorporating the $\tfrac{\mu}{2}\|\varepsilon(\bm u_0)\|^2$ term into the initial energy, and applying Gronwall’s inequality to handle the integral term $\int_0^\tau \tfrac{\mu}{2} \| \varepsilon (\bm u) \|^2 \, ds$, we arrive at:
\begin{align}
     & \mathcal{E}(\bm u(\tau), \xi(\tau), \phi(\tau), \psi(\tau)) + K \| \phi \|_{L^2(0,T_f;Q)}^2 + D \| \psi \|_{L^2(0,T_f;S)}^2 
     \nonumber \\
     & \lesssim 
     \mathcal{E}(\bm u_0, \xi_0, \phi_0, \psi_0) + \|\bm f\|_{H^1(0,T_f;[L^2(\Omega)]^d)}^2 + \|g\|_{L^2(0,T_f;L^2(\Omega))}^2 + \|h\|_{L^2(0,T_f;L^2(\Omega))}^2. \label{est:stabilityB} 
\end{align}
Finally, applying the inf-sup condition \eqref{infsupcon} gives:
\begin{align}
C_{is} \| \xi(\tau) \| & \leq \sup_{\bm 0 \not = \bm{v}\in \bm{V}} \frac{(\xi, \nabla \cdot \bm{v})}{\| \bm{v} \|_{H^1}} 
= \sup_{\bm 0 \not = \bm{v}\in \bm{V}} \frac{2\mu (\varepsilon(\bm u), \varepsilon(\bm{v})) - (\bm f, \bm v)}{\| \bm{v} \|_{H^1}} \nonumber \\
& \leq 2\mu \| \varepsilon(\bm  u(\tau)) \| + C_p \|\bm f(\tau)\|. \label{eqf:mech_infsup}
\end{align}
Combining \eqref{est:stabilityB} and \eqref{eqf:mech_infsup} concludes \eqref{est:stability} and \eqref{est:stabilityA}.

Next, we differentiate the momentum equation \eqref{wf:momentum} with respect to time:
\begin{align}
    2\mu(\varepsilon(\partial_t\bm u),\varepsilon(\bm v)) - (\partial_t\xi,\nabla\cdot\bm v) =  (\partial_t\bm f,\bm v). \label{dtw1}
\end{align}
We then test \eqref{dtw1} with $\bm v= \partial_t\bm u$, \eqref{dtw2} with $w = \partial_t \xi$, \eqref{wf:phi} with $q = \partial_t \phi$, and \eqref{wf:psi} with $s = \partial_t \psi$. Recalling the algebraic structure established in \eqref{lem1:eq2} and invoking the chain rule for the gradient terms, summing these contributions leads to:
\begin{align*}
& \mathcal{E}(\partial_t \bm u, \partial_t \xi, \partial_t \phi, \partial_t \psi) + \tfrac{1}{2} \tfrac{d}{dt} \mathcal{V}( \phi, \psi)
= (\partial_t \bm f,\partial_t \bm u)
+ (g,\partial_t \phi)
+ (h,\partial_t \psi).
\end{align*}
Integrating over $(0, \tau)$ yields
\begin{align} 
& \int_0^\tau \mathcal{E}(\partial_t \bm u, \partial_t \xi, \partial_t \phi, \partial_t \psi) \, ds + \tfrac{1}{2} \mathcal{V}( \phi(\tau), \psi(\tau)) \nonumber \\ 
& = \tfrac{1}{2} \mathcal{V}( \phi_0, \psi_0) + \int_0^\tau \big[ (\partial_t \bm f, \partial_t \bm u) + (g, \partial_t \phi) + (h, \partial_t \psi) \big ] \, ds. \label{eq:high_order_identity} 
\end{align}

We next address the right-hand side of the \eqref{eq:high_order_identity}. Estimating the source terms using integration by parts in time, analogous to \eqref{thm2:ieq1}, yields
\begin{align*} 
    & \int_0^\tau (\partial_t \bm f , \partial_t \bm u) \, ds \leq C\| \bm f \|_{H^1(0,T_f;[L^2(\Omega)]^d)}^2 + \int_0^\tau \mu \| \varepsilon (\partial_t \bm u) \|^2 \, ds, \\
    & \int_0^\tau (g , \partial_t \phi) \, ds \leq C \| g \|_{H^1(0,T_f;L^2(\Omega))}^2 + \tfrac{K}{4} \| \nabla \phi(\tau) \|^2 + \tfrac{K}{4} \| \nabla \phi_0\|^2 + \int_0^\tau \tfrac{K}{4}  \| \nabla \phi \|^2 \, ds,  \\
    & \int_0^\tau (h , \partial_t \psi) \, ds \leq C \| h \|_{H^1(0,T_f;L^2(\Omega))}^2 + \tfrac{D}{4} \| \nabla \psi(\tau) \|^2 + \tfrac{D}{4} \| \nabla \psi_0\|^2 + \int_0^\tau \tfrac{D}{4}  \| \nabla \psi \|^2 \, ds. 
\end{align*}
Absorbing the terms $\int_0^\tau \mu \| \varepsilon (\partial_t \bm u) \|^2 \, ds$, $\tfrac{K}{4}\|\nabla \phi(\tau)\|^2$,  and $\tfrac{D}{4}\|\nabla \psi(\tau)\|^2$ into the left-hand side of \eqref{eq:high_order_identity}, incorporating the $\tfrac{K}{4}\|\nabla \phi_0\|^2$ and $\tfrac{D}{4}\|\nabla \psi_0\|^2$ terms into the initial energy, and applying Gronwall’s
inequality to handle the terms $\int_0^\tau \tfrac{K}{4}  \| \nabla \phi \|^2 \, ds$ and $\int_0^\tau \tfrac{D}{4}  \| \nabla \psi \|^2 \, ds$, we arrive at:
\begin{align}
& \int_0^\tau \mu \| \varepsilon(\partial_t \bm u) \|^2 \, ds + \mathcal{V}(\phi(\tau), \psi(\tau)) \nonumber \\
& \lesssim \mathcal{V}(\phi_0, \psi_0)  + \|\bm f\|_{H^1(0,T_f;[L^2(\Omega)]^d)}^2 + \|g\|_{H^1(0,T_f;L^2(\Omega))}^2 + \|h\|_{H^1(0,T_f;L^2(\Omega))}^2. \label{est:stability2_inter}
\end{align}
Finally, applying the same technique used in \eqref{eqf:mech_infsup} to \eqref{dtw1} yields:
\begin{align}
C_{is} \| \partial_t \xi \|  \leq 2\mu \| \varepsilon( \partial_t \bm  u) \| + C_p \|\partial_t \bm f\|. \label{eqf:mech_infsup2}
\end{align}
Combining \eqref{est:stability2_inter} and \eqref{eqf:mech_infsup2} concludes \eqref{est:stability2} and \eqref{est:stability2A}.
\end{proof}

With the uniform a priori estimates of Lemma~\ref{lem:priori}, we can extract weakly convergent subsequences. Using compactness arguments and passing to the limit $j,k,l,m \to \infty$, we obtain the existence of a weak solution
to \eqref{weakform}. Uniqueness follows by considering the difference of two solutions, which satisfies the homogeneous system with zero initial data.
Applying the energy identities \eqref{eq:energy_identity} and \eqref{eq:high_order_identity} implies that the difference vanishes identically.

\subsection{Space-time iterative approximation}

We introduce a space-time iterative decoupled scheme at the continuous level,
which serves as a fixed-point approximation of the coupled system
\eqref{weakform}. Let the superscript $i \geq 1$ denote the current iteration index. For the initial guess $i = 0$, one may choose an arbitrary state, though a common choice is to use the initial conditions. Given the solution from the previous iteration, specifically the total pressure $\xi^{i-1}$, the coupled problem is split into two sequential sub-problems:

\paragraph{Step 1: Transport sub-problem}
First, we update the generalized pressure variables by treating the term involving $\partial_t \xi$ explicitly using the solution from the previous iteration. For almost every $t \in (0,T_f]$, find $(\phi^i, \psi^i) \in Q\times S$ such that
\begin{subequations}
\label{iter:step1}
\begin{align}
\big(c_1+\tfrac{\alpha^2}{\lambda}\big)(\partial_t {\phi}^i, q) + \big(\tfrac{\alpha\beta}{\lambda}-b_0\big)(\partial_t {\psi}^i, q)  + K( \nabla\phi^i,\nabla q) \qquad \qquad \qquad \nonumber \\
\ + \gamma(\phi^i-\psi^i, q) 
= (g,q) + \tfrac{\alpha}{\lambda}(\partial_t {\xi}^{i-1}, q) \qquad \forall q \in Q,\\
\big(c_2+\tfrac{\beta^2}{\lambda}\big)(\partial_t {\psi}^i, s) + \big(\tfrac{\alpha\beta}{\lambda}-b_0\big) (\partial_t {\phi}^i, s)  + D( \nabla\psi^i,\nabla s) \qquad \qquad \qquad \nonumber \\
\ + \gamma(\psi^i-\phi^i, s) 
= (h,s) + \tfrac{\beta}{\lambda}(\partial_t {\xi}^{i-1}, s) \qquad \forall s \in S.
\end{align}
\end{subequations}

\paragraph{Step 2: Mechanics sub-problem}
Next, we update the mechanical variables using the transport variables computed in \textit{Step 1}. For almost every $t \in (0,T_f]$, find $(\bm u^i , \xi^i) \in \bm{V} \times W$ such that
\begin{subequations}
\label{iter:step2}
\begin{align}
& 2\mu(\varepsilon(\bm u^i),\varepsilon(\bm v)) - (\xi^i,\nabla\cdot\bm v)  = (\bm f,\bm v) \qquad & \forall & \bm v \in \bm V, \\
& (\nabla\cdot\bm u^i, w) +\tfrac{1}{\lambda} (\xi^i, w) = \tfrac{\alpha}{\lambda} (\phi^i, w) + \tfrac{\beta}{\lambda} (\psi^i, w) \qquad & \forall & w \in W.
\end{align}
\end{subequations}

The above two-step procedure defines a continuous fixed-point iteration
for the coupled system \eqref{weakform}. To analyze the convergence of
this iterative approximation, we introduce the errors between the approximate sequence $\{(\bm u^i , \xi^i,\phi^i, \psi^i)\}_{i\geq 0}$ generated by \eqref{iter:step1}--\eqref{iter:step2} and the exact solution $(\bm u, \xi,\phi, \psi)$ to the system \eqref{weakform} as:
\begin{align*}
e_{\bm{u}}^i = \bm{u}^i - \bm{u}, \quad e_\xi^i = \xi^i - \xi, \quad
e_\phi^i = \phi^i - \phi, \quad e_\psi^i = \psi^i - \psi.
\end{align*}
The following theorem establishes the convergence of the iterative
procedure by showing that these errors vanish as $i \to \infty$.


\begin{theorem}
\label{thm:convergence}
The iterative decoupled scheme \eqref{iter:step1}--\eqref{iter:step2} is strictly contractive. Specifically, the following error estimate holds:
\begin{equation}
    \|\partial_t {e}_{\xi}^{i}\|_{L^2(0,T_f; W)} \leq L_{\mathrm{con}} \|\partial_t {e}_{\xi}^{i-1}\|_{L^2(0,T_f; W)}, \label{thm:contraction}
\end{equation}
where the contraction factor is given by
\begin{equation}
    L_{\mathrm{con}} := \frac{1}{\sqrt{1 + 2\lambda C_\mu}} < 1,  \label{def:Lcon}
\end{equation}
with $C_\mu$ being a positive constant. As $i \to \infty$, the sequence of iterates converges to the unique weak solution of the coupled system in the following sense:
\begin{align*}
    & \bm{e}_u^i \to \bm{0} \quad \text{in } L^\infty(0,T_f; \bm{V}), & 
    & e_\xi^i \to 0 \quad \text{in } L^\infty(0,T_f; W), 
    \\
    & e_\phi^i \to 0 \quad \text{in } L^\infty(0,T_f; Q), & 
    & e_\psi^i \to 0 \quad \text{in } L^\infty(0,T_f; S).
\end{align*}
Consequently, the iterative scheme is globally convergent.
\end{theorem}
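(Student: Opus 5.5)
The plan is to write down the error equations and test them with time derivatives of the errors. I will combine a transport energy estimate with an inf-sup bound for the mechanics step. Subtracting \eqref{weakform} from \eqref{iter:step1}--\eqref{iter:step2} gives the following. The transport errors satisfy \eqref{iter:step1} with data $g=h=0$ and right-hand sides $\tfrac{\alpha}{\lambda}(\partial_t e_\xi^{i-1},q)$ and $\tfrac{\beta}{\lambda}(\partial_t e_\xi^{i-1},s)$. The mechanics errors satisfy
\[
2\mu(\varepsilon(e_{\bm u}^i),\varepsilon(\bm v))-(e_\xi^i,\nabla\cdot\bm v)=0,\qquad (\nabla\cdot e_{\bm u}^i,w)+\tfrac1\lambda(e_\xi^i,w)=\tfrac1\lambda(\alpha e_\phi^i+\beta e_\psi^i,w).
\]
I assume every iterate carries the exact initial data, so that $e_\phi^i(0)=e_\psi^i(0)=0$. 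The mechanics step at $t=0$ then gives $e_{\bm u}^i(0)=\bm 0$ and $e_\xi^i(0)=0$. Throughout I write $P^i:=\alpha\partial_t e_\phi^i+\beta\partial_t e_\psi^i$.

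\textbf{Transport step.} I test with $q=\partial_t e_\phi^i$ and $s=\partial_t e_\psi^i$. The mass terms regroup exactly as in \eqref{lem1:eq2}--\eqref{lem1:eq3}, with the $\xi$-part now moved to the right-hand side. This yields
\[
(c_1-b_0)\|\partial_t e_\phi^i\|^2+(c_2-b_0)\|\partial_t e_\psi^i\|^2+b_0\|\partial_t(e_\phi^i-e_\psi^i)\|^2+\tfrac1\lambda\|P^i\|^2+\tfrac12\tfrac{d}{dt}\mathcal V(e_\phi^i,e_\psi^i)=\tfrac1\lambda(\partial_t e_\xi^{i-1},P^i).
\]
I drop the nonnegative terms, apply Young's inequality to the right-hand side, and integrate over $(0,\tau)$ using $\mathcal V(0)=0$. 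This gives
\[
\tfrac1{2\lambda}\|P^i\|^2_{L^2(0,\tau;L^2)}+\mathcal V(e_\phi^i(\tau),e_\psi^i(\tau))\le\tfrac1{2\lambda}\|\partial_t e_\xi^{i-1}\|^2_{L^2(0,\tau;L^2)} .
\]

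\textbf{Mechanics step.} I differentiate the mechanics error equations in time, then test with $\bm v=\partial_t e_{\bm u}^i$ and $w=\partial_t e_\xi^i$. The divergence terms cancel and leave
\[
2\mu\|\varepsilon(\partial_t e_{\bm u}^i)\|^2+\tfrac1\lambda\|\partial_t e_\xi^i\|^2=\tfrac1\lambda(P^i,\partial_t e_\xi^i).
\]
The key extra ingredient is the inf-sup bound applied to the differentiated momentum error equation, as in \eqref{eqf:mech_infsup2} with $\bm f=0$. It gives $C_{is}\|\partial_t e_\xi^i\|\le 2\mu\|\varepsilon(\partial_t e_{\bm u}^i)\|$. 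Hence $2\mu\|\varepsilon(\partial_t e_{\bm u}^i)\|^2\ge C_\mu\|\partial_t e_\xi^i\|^2$ with $C_\mu:=C_{is}^2/(2\mu)$.

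I then apply Young's inequality to the right-hand side as $\tfrac1{2\lambda}\|P^i\|^2+\tfrac1{2\lambda}\|\partial_t e_\xi^i\|^2$, which gives $(1+2\lambda C_\mu)\|\partial_t e_\xi^i\|^2\le\|P^i\|^2$. Integrating in time and inserting the transport bound yields \eqref{thm:contraction} with exactly $L_{\rm con}=(1+2\lambda C_\mu)^{-1/2}$.

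\textbf{Convergence.} By iterating the contraction, $\partial_t e_\xi^i\to0$ in $L^2(0,T_f;W)$. Since $e_\xi^i(0)=0$, writing $e_\xi^i(t)=\int_0^t\partial_t e_\xi^i$ gives $e_\xi^i\to0$ in $L^\infty(0,T_f;W)$. The transport bound gives $\sup_\tau\mathcal V(e_\phi^i,e_\psi^i)\lesssim\|\partial_t e_\xi^{i-1}\|^2_{L^2}\to0$. By Poincaré, this yields $e_\phi^i\to0$ in $L^\infty(0,T_f;Q)$ and $e_\psi^i\to0$ in $L^\infty(0,T_f;S)$. Finally, I test the undifferentiated momentum error equation with $\bm v=e_{\bm u}^i$ and apply Korn's inequality \eqref{korn}. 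This gives $\|e_{\bm u}^i\|_{H^1}\lesssim\mu^{-1}\|e_\xi^i\|$, hence $e_{\bm u}^i\to\bm 0$ in $L^\infty(0,T_f;\bm V)$.

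The main obstacle is the mechanics step, in particular extracting a $\lambda$-weighted gain strictly better than the trivial bound $\|\partial_t e_\xi^i\|\le\|P^i\|$. This is exactly where the inf-sup condition \eqref{infsupcon} must be used to convert the elastic energy into control of $\partial_t e_\xi^i$. A secondary point is the compatibility of the initial data of the iterates, which is needed for $e_\xi^i(0)=0$ and $\mathcal V(0)=0$. I will state this as part of the setup of the scheme.
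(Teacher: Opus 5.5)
Your proposal is correct and follows the paper's proof essentially step for step. It uses the same testing of the transport error equations with $\partial_t e_\phi^i,\partial_t e_\psi^i$ plus Young's inequality, the same differentiated mechanics identity combined with the inf-sup bound (your $C_\mu = C_{is}^2/(2\mu)$ makes the paper's constant explicit), and the same fundamental-theorem-of-calculus argument for $L^\infty$ convergence; the remaining differences are cosmetic: integrating up to an arbitrary $\tau$, bounding $\bm e_u^i$ from the undifferentiated momentum equation, and a harmless missing factor $\tfrac12$ in front of $\mathcal V(e_\phi^i(\tau),e_\psi^i(\tau))$ after integration.
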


\begin{proof}
The proof is structured in three steps. We first derive error estimates for the transport and mechanics sub-problems separately, and subsequently combine them to demonstrate the contraction of the iterative scheme.

\noindent
\textbf{1. Transport error equations.}
Subtracting the exact equations \eqref{wf:momentum}--\eqref{wf:constraint} from the transport sub-problem \eqref{iter:step1} yields the error equations:
\begin{align}
(c_1+\tfrac{\alpha^2}{\lambda})(\partial_t  e_{\phi}^i, q) + (\tfrac{\alpha\beta}{\lambda}-b_0)(\partial_t  e_{\psi}^i, q)  + K(\nabla e_\phi^i,\nabla q) + \gamma( e_\phi^i- e_\psi^i, q) & = \tfrac{\alpha}{\lambda}(\partial_t  e_{\xi}^{i-1}, q), \label{con:eq1}
\\
(c_2+\tfrac{\beta^2}{\lambda})(\partial_t  e_{\psi}^i, s) + (\tfrac{\alpha\beta}{\lambda}-b_0) (\partial_t  e_{\phi}^i, s)  + D(\nabla e_\psi^i,\nabla s) + \gamma(e_\psi^i-e_\phi^i, s) & = \tfrac{\beta}{\lambda}(\partial_t  e_{\xi}^{i-1}, s). \label{con:eq2}
\end{align}
Choosing the test functions $q = \partial_t {e}_\phi^i$ in \eqref{con:eq1}, $s = \partial_t {e}_\psi^i$ in \eqref{con:eq2}, we sum the results to obtain:
\begin{align}
& (c_1-b_0) \| \partial_t  e_{\phi}^i \|^2 + (c_2-b_0) \| \partial_t  e_{\psi}^i \|^2 + b_0 \| \partial_t  e_{\phi}^i - \partial_t {e}_\psi^i \|^2  + \tfrac{1}{\lambda} \| \alpha \partial_t  e_{\phi}^i +  \beta \partial_t {e}_\psi^i \|^2
\nonumber \\
& + \tfrac{1}{2}\tfrac{d}{dt} ( K \| \nabla e_\phi^i \|^2 + D \| \nabla e_\psi^i \|^2 + \gamma \| e_\phi^i- e_\psi^i \|^2)
= \tfrac{1}{\lambda}(\partial_t {e}_{\xi}^{i-1}, \alpha \partial_t  e_{\phi}^i + \beta \partial_t {e}_\psi^i).
\label{con:eq3}
\end{align}
Applying the Cauchy--Schwarz and Young inequalities to the right-hand side:
\begin{equation}
\tfrac{1}{\lambda}(\partial_t {e}_{\xi}^{i-1}, \alpha\partial_t {e}_\phi^i + \beta\partial_t {e}_\psi^i) \le \tfrac{1}{2\lambda}  \|\partial_t {e}_{\xi}^{i-1}\|^2 + \tfrac{1}{2\lambda} \|\alpha\partial_t {e}_\phi^i + \beta\partial_t {e}_\psi^i\|^2.
\label{con:eq4}
\end{equation}
Combining \eqref{con:eq3} and \eqref{con:eq4}, integrating over $(0, T_f)$, and using the fact that initial errors at $t=0$ are zero (i.e., $e_\phi^i(0) = e_\psi^i(0) = 0$), we obtain
\begin{align}
& 2(c_1-b_0) \int_0^{T_f} \| \partial_t  e_{\phi}^i \|^2 ds + 2(c_2-b_0) \int_0^{T_f} \| \partial_t  e_{\psi}^i \|^2 ds + 2b_0 \int_0^{T_f} \| \partial_t  e_{\phi}^i - \partial_t {e}_\psi^i \|^2 ds 
\nonumber \\
& + K \| \nabla e_\phi^i (T_f) \|^2 + D \| \nabla e_\psi^i (T_f) \|^2 + \gamma \| e_\phi^i (T_f) - e_\psi^i (T_f) \|^2
\nonumber \\
& + \tfrac{1}{\lambda} \int_0^{T_f} \| \alpha \partial_t  e_{\phi}^i +  \beta \partial_t {e}_\psi^i \|^2 ds \leq \tfrac{1}{\lambda} \int_0^{T_f} \|\partial_t {e}_{\xi}^{i-1}\|^2 ds, \label{est:transport_finalp}
\end{align}
Finally, disregarding the non-negative terms on the left-hand side of \eqref{est:transport_finalp}, we arrive at the following estimate for the transport step:
\begin{align}
\|\alpha\partial_t {e}_\phi^i + \beta\partial_t {e}_\psi^i\|_{L^2(0,T_f; L^2(\Omega))}^2   \le   \|\partial_t {e}_{\xi}^{i-1}\|_{L^2(0,T_f; W)}^2  . \label{est:transport_final}
\end{align}

\noindent
\textbf{2. Mechanics error equations.}
Subtracting the exact equations \eqref{wf:phi}--\eqref{wf:psi} from the mechanics sub-problem \eqref{iter:step2} yields the error equations:
\begin{align}
& 2\mu(\varepsilon( \bm e_u^i),\varepsilon(  \bm v)) - (  e_\xi^i,\nabla\cdot \bm v) = 0, \label{err:mom} \\
& (\nabla\cdot \bm e_u^i, w) +\tfrac{1}{\lambda} ( e_\xi^i, w) = \tfrac{\alpha}{\lambda} ( e_\phi^i, w) + \tfrac{\beta}{\lambda} ( e_\psi^i, w). \label{err:mass}
\end{align}
Differentiating \eqref{err:mom} and \eqref{err:mass} with respect to time yields
\begin{align}
& 2\mu(\varepsilon( \partial_t \bm e_u^i),\varepsilon(  \bm v)) - (  \partial_t e_\xi^i,\nabla\cdot \bm v) = 0, \label{err:dtmom} \\
& (\nabla\cdot \partial_t \bm e_u^i, w) +\tfrac{1}{\lambda} ( \partial_t e_\xi^i, w) = \tfrac{\alpha}{\lambda} ( \partial_t e_\phi^i, w) + \tfrac{\beta}{\lambda} ( \partial_t e_\psi^i, w). \label{err:dtmass}
\end{align}
Choosing the test functions $\bm{v} = \partial_t {\bm e}_u^i$ in \eqref{err:dtmom} and $w = \partial_t {e}_\xi^i$ in \eqref{err:dtmass}, we sum the results to get:
\begin{align}
2\mu \| \varepsilon( \partial_t {\bm e}_u^i) \|^2 + \tfrac{1}{\lambda} \| \partial_t  e_\xi^i \|^2  = \tfrac{1}{\lambda} (\alpha \partial_t   e_\phi^i + \beta \partial_t  e_\psi^i, \partial_t {e}_\xi^i). \label{eq:mecha_identity}
\end{align} 
Applying the Cauchy--Schwarz and Young inequalities to the right-hand side:
\begin{equation}
\tfrac{1}{\lambda}(\partial_t {e}_{\xi}^{i}, \alpha\partial_t {e}_\phi^i + \beta\partial_t {e}_\psi^i) \le \tfrac{1}{2\lambda}  \|\partial_t {e}_{\xi}^{i}\|^2 + \tfrac{1}{2\lambda} \|\alpha\partial_t {e}_\phi^i + \beta\partial_t {e}_\psi^i\|^2.
\label{con:eq6}
\end{equation}
Applying the inf-sup condition \eqref{infsupcon} to \eqref{err:dtmom}, we obtain
\begin{align*}
C_{is} \| \partial_t  e_\xi^i \| &\leq \sup_{\bm 0 \not = \bm{v}\in \bm{V}} \frac{(\partial_t  e_\xi^i, \nabla \cdot \bm{v})}{\| \bm{v} \|_{H^1}} 
= \sup_{\bm 0 \not = \bm{v}\in \bm{V}} \frac{2\mu (\varepsilon(\partial_t {\bm e}_u^i), \varepsilon(\bm{v}))}{\| \bm{v} \|_{H^1}} 
\leq 2\mu \| \varepsilon(\partial_t {\bm e}_u^i) \|, 
\end{align*}
implying that there exists \( C_\mu>0\) (independent of \(h\) and \(\Delta t\)) such that
\begin{align}
C_\mu \|\partial_t e_\xi^{i}\|^2 \le 2\mu\|\varepsilon(\partial_t \bm e_u^{i})\|^2. \label{xibound}
\end{align}
Substituting the bounds \eqref{con:eq6} and \eqref{xibound} into \eqref{eq:mecha_identity}, integrating over $(0,T_f)$, we obtain:
\begin{align}
& (C_\mu+\tfrac{1}{2\lambda})\|\partial_t  e_\xi^i\|_{L^2(0,T_f; W)}^2 = \int_0^T \left( C_\mu \| \partial_t  e_\xi^i \|^2 + \tfrac{1}{2\lambda} \| \partial_t  e_\xi^i \|^2 \right) ds 
\nonumber \\
& \qquad \leq \int_0^T \tfrac{1}{2\lambda} \|\alpha \partial_t   e_\phi^i + \beta \partial_t  e_\psi^i\|^2 ds =  \tfrac{1}{2\lambda} \| \alpha \partial_t   e_\phi^i + \beta \partial_t  e_\psi^i \|_{L^2(0,T_f; L^2(\Omega))}^2.
\label{est:mechanics}
\end{align}

\noindent
\textbf{3. Contraction and convergence.}
Combining the estimates \eqref{est:transport_final} and \eqref{est:mechanics}, we obtain the contraction inequality:
\begin{align*}
(C_\mu+\tfrac{1}{2\lambda})\|\partial_t  e_\xi^i\|_{L^2(0,T_f; W)}^2 \le \tfrac{1}{2\lambda} \| \alpha \partial_t   e_\phi^i + \beta \partial_t  e_\psi^i \|_{L^2(0,T_f; L^2(\Omega))}^2 \le \tfrac{1}{2\lambda} \|\partial_t  e_\xi^{i-1}\|_{L^2(0,T_f; W)}^2.
\end{align*}
This proves the contraction property \eqref{thm:contraction}. Since $C_\mu > 0$ and $\lambda > 0$, we have $L_{\mathrm{con}}$ defined in \eqref{def:Lcon} strictly smaller than $1$. To recover the convergence of the variable itself from its time derivative, we recall that the initial error is zero (i.e., $e_\xi^i(0) = 0$). Applying the Fundamental Theorem of Calculus and the Cauchy--Schwarz inequality, we obtain the bound:
\begin{align}
\| e_\xi^i(\tau) \|^2 = \left\| \int_0^\tau \partial_t {e}_\xi^i(s) \, ds \right\|^2 \le \tau \int_0^\tau \| \partial_t {e}_\xi^i(s) \|^2 \, ds \le T_f \| \partial_t {e}_\xi^i \|_{L^2(0,T_f; W)}^2, \label{thm1:con1}
\end{align}
for any $\tau \in (0, T_f]$.
This implies that $e_\xi^i$ converges to zero in $L^\infty(0, T_f; W)$ norm as $i \to \infty$.

We now extend this convergence result to the other variables. For the displacement, testing the equation \eqref{err:dtmom} with $\bm{v} = \partial_t \bm{e}_u^i$ yields $2\mu \|\varepsilon(\partial_t \bm{e}_u^i)\|^2 = (\partial_t e_\xi^i, \nabla \cdot \partial_t \bm{e}_u^i)$. Using the Cauchy--Schwarz and Korn's inequalities, we deduce that $\|\partial_t \bm{e}_u^i\|_{H^1} \lesssim \|\partial_t e_\xi^i\|$. Applying the same argument as in \eqref{thm1:con1}, we conclude that $\bm{e}_u^i \to \bm{0}$ in $L^\infty(0,T_f; \bm{V})$. 
For the generalized pressures, we refer to the transport energy estimate \eqref{est:transport_finalp}. As the right-hand side is bounded by $\|\partial_t e_\xi^{i-1}\|^2$, the non-negativity of the terms on the left-hand side implies that $K\|\nabla e_\phi^i(T_f)\|^2 \to 0$ and $D\|\nabla e_\psi^i(T_f)\|^2 \to 0$. Since this estimate holds for any arbitrary final time $T_f$, it follows that $e_\phi^i \to 0 \ \text{in } L^\infty(0,T_f; Q)$, and $e_\psi^i \to 0 \ \text{in } L^\infty(0,T_f; S)$.
\end{proof}


\section{Fully-discrete approximations}
\label{sec:3}

In this section, we present the fully-discrete formulation for the coupled problem \eqref{fourfield}. Let $\mathcal{T}_h$ be a shape-regular, quasi-uniform triangulation of the domain $\Omega$ into disjoint elements $E$, where $h = \max_{E \in \mathcal{T}_h} \text{diam}(E)$ denotes the global mesh size parameter. 

\subsection{Spatial and temporal discretization}
\label{STd}

For the spatial discretization, we approximate the mechanics pair $(\bm u, \xi)$ using stable Taylor-Hood elements, consisting of continuous piecewise polynomials of degree $k\geq 2$ for displacement and degree $k-1$ for total pressure. For the transport variables $(\phi, \psi)$, we use continuous piecewise polynomials of degree $l \ge 1$. Accordingly, the finite element spaces are defined as:
\begin{align*}
    & \bm{V}_h := \{ \bm{v}_h \in [C^0(\overline{\Omega})]^d \cap \bm{V} : \bm{v}_h|_E \in [\mathbb{P}_k(E)]^d, \ \forall E \in \mathcal{T}_h \}, \\
    & W_h := \{ w_h \in C^0(\overline{\Omega}) \cap W : w_h|_E \in \mathbb{P}_{k-1}(E), \ \forall E \in \mathcal{T}_h \}, \\
    & Q_h = S_h := \{ q_h \in C^0(\overline{\Omega}) \cap Q : q_h|_E \in \mathbb{P}_l(E), \ \forall E \in \mathcal{T}_h \}. 
\end{align*}

For the temporal discretization, we partition the time interval $(0, T_f]$ into $N$ uniform sub-intervals of size $\Delta t = T_f/N$, defined by $0 = t_0 < t_1 < \dots < t_N = T_f$, where $t_n = n\Delta t$. 
For a generic discrete variable $\chi_h$, we denote its value at $t_n$ by $\chi_h^n$. For simplicity, we adopt the Backward Euler method, noting that the extension to higher-order schemes such as Crank-Nicolson or BDF2 is straightforward \cite{gu2024crank,gu2025pod}. Accordingly, we define the backward difference operator as:
\begin{align*}
    \bar{\partial}_t \chi_h^n := \frac{\chi_h^n - \chi_h^{n-1}}{\Delta t}.
\end{align*} 

To derive the a priori error estimates, we introduce the following assumption. We consider the spatial discretization for given values of $k$ and $l$. The following regularity assumption on the exact solution is imposed:
\begin{assumption}
\label{ass2}
    We assume that the exact solutions $(\bm{u},\xi,\phi,\psi)$ of System \eqref{weakform} satisfy
\begin{align*}
    & \bm{u} \in C (0,T_f; [H^1_{0,\Gamma_D} (\Omega) \cap W^{1,\infty}(\Omega)]^d ) \cap H^1(0,T_f;[H^{k+1}(\Omega)]^d) \cap H^2(0,T_f;[H^1(\Omega)]^d), 
    \\
    & \xi \in C(0,T_f; L^{2}(\Omega) \cap W^{1,\infty}(\Omega)) \cap H^1(0,T_f;H^k(\Omega)) \cap H^2(0,T_f;L^2(\Omega)), \\
    & \phi, \psi \in C(0,T_f; H^1_{0} (\Omega) \cap W^{1,\infty}(\Omega)) \cap H^1(0,T_f;H^{l+1}(\Omega)) \cap H^2(0,T_f;H^1(\Omega)).
\end{align*}
\end{assumption}

Note that the Taylor-Hood pair $(\bm{V}_h, W_h)$ satisfies the discrete inf-sup condition. That is, there exists a constant $\tilde{C}_{is} > 0$, independent of the mesh size $h$, such that
\begin{align} \label{disc:infsup}
    \tilde{C}_{is} \| w_h \| \leq \sup_{\bm 0 \not = \bm{v}_h \in \bm{V}_h} \frac{(\nabla \cdot \bm{v}_h, w_h)}{\| \bm{v}_h \|_{H^1}}  \quad \forall w_h \in W_h.
\end{align}
We define the Stokes-like projection $(\Pi_h^{\bm{u}} \bm{u}, \Pi_h^\xi \xi) \in \bm{V}_h \times W_h$ satisfying
\begin{subequations}
\label{eq:stokes_projection}
\begin{align}
(2\mu \varepsilon(\Pi_h^{\bm{u}} \bm{u}), \varepsilon(\bm{v}_h)) + (\Pi_h^\xi \xi, \nabla \cdot \bm{v}_h) &= (2\mu \varepsilon(\bm{u}), \varepsilon(\bm{v}_h)) + (\xi, \nabla \cdot \bm{v}_h), \\
(\nabla \cdot \Pi_h^{\bm{u}} \bm{u}, w_h) &= (\nabla \cdot \bm{u}, w_h),
\end{align}
\end{subequations}
for all $(\bm v_h, w_h) \in \bm{V}_h \times W_h$. The Stokes-like projection satisfies:
\begin{align}
\| \bm{u} - \Pi_h^{\bm{u}} \bm{u} \|_{H^1} + \| \xi - \Pi_h^\xi \xi \| \leq C h^k \left( \| \bm{u} \|_{H^{k+1}} + \| \xi \|_{H^k} \right). \label{interpolation11}
\end{align}
We also introduce the joint elliptic projection $(\Pi_h^\phi \phi, \Pi_h^\psi \psi) \in Q_h \times S_h$ satisfying
\begin{align}
\label{eq:elliptic_projection}
& K(\nabla \Pi_h^\phi \phi, \nabla q_h) + D(\nabla \Pi_h^\psi \psi, \nabla s_h) + \gamma(\Pi_h^\phi \phi - \Pi_h^\psi \psi, q_h - s_h) \nonumber \\
= & \ K(\nabla \phi, \nabla q_h) + D(\nabla \psi, \nabla s_h) + \gamma(\phi - \psi, q_h - s_h),
\end{align}
for all $(\bm v_h, w_h) \in Q_h \times S_h$. The elliptic projection satisfies:
\begin{align}
& \| \phi - \Pi_h^\phi \phi \|_{H^1} + \| \psi - \Pi_h^\psi \psi \|_{H^1} \leq C h^l \left( \| \phi \|_{H^{l+1}} + \| \psi \|_{H^{l+1}} \right), \label{interpolation22} \\
& \| \phi - \Pi_h^\phi \phi \| + \| \psi - \Pi_h^\psi \psi \| \leq C h^{l+1} \left( \| \phi \|_{H^{l+1}} + \| \psi \|_{H^{l+1}} \right).
\end{align}

\subsection{Fully-discrete monolithic algorithm}

Based on the spatial and temporal discretizations defined above, we formulate the fully-discrete problem. The solution process follows a time-stepping strategy initialized by the known conditions at $t=0$, denoted by $(\bm{u}^{0}_h, \xi^{0}_h, \phi^{0}_h, \psi^{0}_h)$. Given the solution $(\bm{u}^{n-1}_h, \xi^{n-1}_h, \phi^{n-1}_h, \psi^{n-1}_h)$ from the previous time step, we advance the system to the current time level $t_n$. For each time step $n=1, \dots, N$, the problem reads:

Find $(\bm{u}^n_h, \xi^n_h, \phi^n_h, \psi^n_h) \in \bm{V}_h \times W_h \times Q_h \times S_h$ such that:
\begin{subequations}
\label{discreteform}
\begin{align}
2\mu(\varepsilon(\bm u^n_h),\varepsilon(\bm v_h))
- (\xi^n_h,\nabla\cdot\bm v_h)
& =  (\bm f^n,\bm v_h) \ \ \forall \bm{v}_h \in \bm{V}_h, \label{dis:mon1}
\\
(\nabla\cdot\bm u^n_h, w_h)
+\tfrac{1}{\lambda} (\xi^n_h, w_h)
-\tfrac{\alpha}{\lambda} (\phi^n_h, w_h)
-\tfrac{\beta}{\lambda} (\psi^n_h, w_h)
& = 0 \qquad \quad \ \forall w_h \in W_h,  \label{dis:mon2}
\\
(c_1+\tfrac{\alpha^2}{\lambda})( \bar{\partial}_t \phi^n_h , q_h)
+(\tfrac{\alpha\beta}{\lambda}-b_0)( \bar{\partial}_t \psi^n_h , q_h)
-\tfrac{\alpha}{\lambda}( \bar{\partial}_t \xi^n_h , q_h & ) \nonumber \\ 
+ K(\nabla \phi^n_h, \nabla q_h) + \gamma (\phi^n_h-\psi^n_h, q_h)
+ \eta_{\phi} h^2 ( \nabla \bar{\partial}_t \phi^n_h, \nabla q_h) & = (g^n,q_h)  \quad \forall q_h \in Q_h,  \label{dis:mon3}
\\
(c_2+\tfrac{\beta^2}{\lambda})(\bar{\partial}_t \psi^n_h, s_h)
+(\tfrac{\alpha\beta}{\lambda}-b_0)( \bar{\partial}_t \phi^n_h , s_h)
-\tfrac{\beta}{\lambda}(\bar{\partial}_t \xi^n_h, s_h & ) 
\nonumber \\
+ D(\nabla \psi^n_h, \nabla s_h) + \gamma (\psi^n_h-\phi^n_h, s_h)
+ \eta_{\psi} h^2 ( \nabla \bar{\partial}_t \psi^n_h , \nabla s_h)  & = (h^n,s_h) \quad \forall s_h \in S_h,  \label{dis:mon4}
\end{align}
\end{subequations}
where the stabilization terms $\eta_\phi h^2(\nabla \bar{\partial}_t \phi_h^n, \nabla q_h)$ and $\eta_\psi h^2(\nabla \bar{\partial}_t \psi_h^n , \nabla s_h)$ are included to dampen potential non-physical oscillations in the generalized pressures \cite{rodrigo2016stability}. We choose parameters $\eta_\phi, \eta_\psi \geq 0$, which ensure consistency with the continuous problem.

We now proceed to the error analysis. We decompose the errors into approximation and discrete components as follows:
\[
\bm{e}_{u}^n = \bm{u}^n - \bm{u}_h^n = (\bm{u}^n - \Pi_h^{\bm{u}} \bm{u}^n) + (\Pi_h^{\bm{u}} \bm{u}^n - \bm{u}_h^n) =: \bm e_{u}^{I,n} + \bm e_{u}^{h,n}, 
\]
for the velocity vector $\bm u$, and similarly for the scalar variables $\chi \in \{\xi, \phi, \psi\}$,
\[
e_{\chi}^n = \chi^n - \chi_h^n = (\chi^n - \Pi_h^{\chi} \chi^n) + (\Pi_h^{\chi} \chi^n - \chi_h^n) =: e_{\chi}^{I,n} + e_{\chi}^{h,n}. 
\]
Using the interpolation properties, the approximation errors with superscript $I$ are bounded. Therefore, the following analysis focuses on estimating the discrete errors with superscript $h$. In the following estimates, we use the notation $X \lesssim Y$ to denote $X \leq C Y$, where $C > 0$ represents a generic constant independent of the mesh size $h$ and the time step $\Delta t$.

\begin{lemma}
\label{lem:truncation_error}
Let $(\bm u, \xi, \phi, \psi)$ and $(\bm u_h^n, \xi_h^n, \phi_h^n, \psi_h^n)$ be the solutions of system \eqref{weakform} and \eqref{discreteform}, respectively. Under Assumption \ref{ass2}, there holds:
\begin{align}
& \mu \| \bm e_u^{h,N} \|_{H^1}^2 + \Delta t \sum_{n=1}^N \mathcal{V}(e_\phi^{h,n}, e_\psi^{h,n}) 
\nonumber \\
& \lesssim (\Delta t)^2  + h^{2k} + h^{2l+2} + h^2 \big( \eta_\phi \max_{1\leq n \leq N} \| \partial_t\phi^n\|^2_{H^1}  + \eta_\psi \max_{1\leq n \leq N} \| \partial_t\psi^n\|^2_{H^1}  \big), 
\\
& \Delta t \sum_{n=1}^N \mathcal{E}(\bar{\partial}_t \bm e_u^{h,n}, \bar{\partial}_t e_\xi^{h,n}, \bar{\partial}_t e_\phi^{h,n}, \bar{\partial}_t e_\psi^{h,n}) + K\| e_\phi^{h,N} \|_{H^1}^2  +D \| e_\psi^{h,N} \|_{H^1}^2
\nonumber \\
& \lesssim (\Delta t)^2  + h^{2k} + h^{2l+2} + h^2 \big( \eta_\phi \max_{1\leq n \leq N} \| \partial_t\phi^n\|^2_{H^1}  + \eta_\psi \max_{1\leq n \leq N} \| \partial_t\psi^n\|^2_{H^1}  \big).
\end{align}
Furthermore, the error term $e_\xi^{h,N}$ satisfies:
\begin{align}
    \tilde{C}_{is} \| e_\xi^{h,N} \| & \lesssim  \mu \| \bm e_u^{h,N} \|_{H^1}.    \label{lem:te3}
\end{align}
\end{lemma}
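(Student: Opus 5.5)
The plan is to repeat, at the discrete level, the two energy arguments from the proof of Lemma~\ref{lem:priori}, applied to the discrete error components $(\bm e_u^{h,n}, e_\xi^{h,n}, e_\phi^{h,n}, e_\psi^{h,n})$. I begin by deriving the error equations. Evaluate \eqref{weakform} at $t_n$ and subtract \eqref{discreteform}. Then split every error as $e^I + e^h$ and use the definitions of the Stokes-like projection \eqref{eq:stokes_projection} and the joint elliptic projection \eqref{eq:elliptic_projection}. This removes the elliptic parts of the interpolation errors, leaving the following residual terms on the right-hand side:
\begin{itemize}
\item the divergence/mass consistency term $\tfrac1\lambda(e_\xi^{I,n}-\alpha e_\phi^{I,n}-\beta e_\psi^{I,n},w_h)$ in the constraint equation;
\item the time-truncation terms $(\partial_t\chi(t_n)-\bar\partial_t\chi^n, \cdot)$ for $\chi\in\{\phi,\psi,\xi\}$;
\item the interpolation terms $(\bar\partial_t e_\chi^{I,n},\cdot)$;
\item the stabilization consistency terms $\eta_\phi h^2(\nabla\bar\partial_t\phi^n,\nabla q_h)$ and $\eta_\psi h^2(\nabla\bar\partial_t\psi^n,\nabla s_h)$.
\end{itemize}
I will bound the truncation terms by Taylor expansion with integral remainder: $\Delta t\sum_n\|\partial_t\chi(t_n)-\bar\partial_t\chi^n\|^2\lesssim(\Delta t)^2\|\partial_{tt}\chi\|^2_{L^2(0,T_f;L^2)}$, using the $H^2$-in-time regularity of Assumption~\ref{ass2}. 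The interpolation terms give $h^{2k}+h^{2l+2}$ via \eqref{interpolation11}--\eqref{interpolation22} applied to $\partial_t\chi$. The stabilization terms contribute $h^2\eta\|\partial_t\phi\|_{H^1}^2$.

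For the first estimate, I will mimic \eqref{eq:energy_identity}. Test the discrete momentum error equation with $\bar\partial_t\bm e_u^{h,n}$, the backward-differenced constraint error with $e_\xi^{h,n}$, and the transport errors with $e_\phi^{h,n}$ and $e_\psi^{h,n}$. Then sum, so that the divergence couplings cancel exactly as in the continuous case. Using $(\bar\partial_t x^n,x^n)=\tfrac1{2\Delta t}(\|x^n\|^2-\|x^{n-1}\|^2+\|x^n-x^{n-1}\|^2)$, the regroupings \eqref{lem1:eq2}--\eqref{lem1:eq3} yield $\tfrac12\bar\partial_t\mathcal{E}(\cdot^n)$ plus nonnegative numerical dissipation plus $\mathcal V(e_\phi^{h,n},e_\psi^{h,n})$. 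The stabilization terms produce additional nonnegative contributions $\tfrac{\eta h^2}{2}\bar\partial_t\|\nabla e^{h,n}\|^2$. Multiplying by $\Delta t$ and summing over $n$, and using zero (or projected) initial errors, the left side controls $\mathcal E^N+\Delta t\sum_n\mathcal V$. The right-hand side residuals are then handled as follows. Those tested against $e_\phi^h$ and $e_\psi^h$ are absorbed into $\mathcal V$ by Young and Poincaré. The residual tested against $\bar\partial_t\bm e_u^{h,n}$, which comes from the constraint residual in the momentum/constraint pairing, needs summation by parts in $n$, just as $\int(\bm f,\partial_t\bm u)$ was integrated by parts in time in \eqref{thm2:ieq1}. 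After that, a discrete Gronwall inequality closes the estimate. Finally, Korn's inequality \eqref{korn} turns $2\mu\|\varepsilon(\bm e_u^{h,N})\|^2$ into $\mu\|\bm e_u^{h,N}\|_{H^1}^2$.

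For the second estimate, I will follow \eqref{eq:high_order_identity}. Difference the momentum error equation in $n$ and test with $\bar\partial_t\bm e_u^{h,n}$; test the differenced constraint with $\bar\partial_t e_\xi^{h,n}$; test the transport errors with $\bar\partial_t e_\phi^{h,n}$ and $\bar\partial_t e_\psi^{h,n}$. This gives $\mathcal E(\bar\partial_t\cdot)+\tfrac12\bar\partial_t\mathcal V+\eta h^2\|\nabla\bar\partial_t e^h\|^2$. The residuals tested against $\bar\partial_t e_\phi^h$ and $\bar\partial_t e_\psi^h$ are absorbed through the terms $(c_i-b_0)\|\cdot\|^2$ and $\tfrac1\lambda\|\cdot\|^2$ of $\mathcal E$ where possible; otherwise I use discrete summation by parts to move the difference quotient onto the residual, at the cost of one more time derivative of the data, which Assumption~\ref{ass2} supplies. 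I expect this to be the \emph{main obstacle}. When $c_1=c_2=b_0$ the storage terms in $\mathcal E$ can degenerate, so I plan to rely on summation by parts throughout rather than on coercivity in $\|\bar\partial_t e_\phi^h\|$. The stabilization residual can be absorbed into $\eta h^2\|\nabla\bar\partial_t e^h\|^2$.

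Finally, \eqref{lem:te3} follows from the discrete inf-sup condition \eqref{disc:infsup} applied to the momentum error equation at $t_N$. The projection \eqref{eq:stokes_projection} makes its right-hand side vanish for the $h$-components, leaving $\tilde C_{is}\|e_\xi^{h,N}\|\le 2\mu\|\varepsilon(\bm e_u^{h,N})\|$, exactly as in \eqref{eqf:mech_infsup}.
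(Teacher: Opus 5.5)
Your overall plan matches the paper's: Stokes and joint elliptic projections to remove the elliptic parts, the same two sets of test functions, discrete Grönwall, and the discrete inf-sup condition for \eqref{lem:te3}. However, the step you flag as the main obstacle does not go through as proposed.

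\textbf{The gap in the second estimate.} For the residuals tested against $\bar\partial_t e_\phi^{h,n}$ and $\bar\partial_t e_\psi^{h,n}$ you rely on discrete summation by parts. That moves a difference quotient onto the truncation error $\tau_\chi^n:=\partial_t\chi(t_n)-\bar\partial_t\chi^n$ and onto $\bar\partial_t e_\chi^{I,n}$. Keeping the rates would then require:
\begin{itemize}
\item $\partial_{ttt}\chi\in L^2(0,T_f;L^2(\Omega))$, so that $\Delta t\sum_n\|\bar\partial_t\tau_\chi^n\|^2\lesssim(\Delta t)^2$;
\item $\partial_{tt}\chi$ in $H^k$ (resp.\ $H^{l+1}$) for the projection errors.
\end{itemize}
Assumption~\ref{ass2} gives only $H^2(0,T_f;\cdot)$ in the low norms and $H^1(0,T_f;\cdot)$ in the high norms. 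With that regularity, $\Delta t\sum_n\|\bar\partial_t\tau_\chi^n\|^2$ can only be bounded by $O(1)$, so the $(\Delta t)^2$ rate is lost. Your claim that the assumption ``supplies'' the extra time derivative is incorrect.

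\textbf{The missing idea.} No coercivity in $\|\bar\partial_t e_\phi^{h,n}\|$ is needed at all. The exact solution satisfies the time-differentiated constraint. The paper uses this to rewrite the constraint's consistency residual in terms of the same quantities $\rho_\chi^n=(\partial_t\chi^n-\bar\partial_t\chi^n)+\bar\partial_t e_\chi^{I,n}$ that occur in the transport equations. The price is a single extra term $(\nabla\cdot(\partial_t\bm u^n-\bar\partial_t\bm u^n),w_h)$.

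After testing, all storage-type residuals enter through exactly the bilinear form that generates $\mathcal{E}$:
$\tfrac1\lambda(\rho_\xi-\alpha\rho_\phi-\beta\rho_\psi,\,x_\xi-\alpha x_\phi-\beta x_\psi)+(c_1-b_0)(\rho_\phi,x_\phi)+(c_2-b_0)(\rho_\psi,x_\psi)+b_0(\rho_\phi-\rho_\psi,\,x_\phi-x_\psi)$.
Young's inequality, applied term by term with matching coefficients, absorbs these into $\tfrac12\mathcal{E}(\bar\partial_t \bm e_u^{h,n},\bar\partial_t e_\xi^{h,n},\bar\partial_t e_\phi^{h,n},\bar\partial_t e_\psi^{h,n})$. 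This works whether or not $c_i-b_0$ vanishes.

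The leftover divergence term is absorbed into $\mu\|\varepsilon(\bar\partial_t\bm e_u^{h,n})\|^2$ via the bound $\tilde C_{is}\|\bar\partial_t e_\xi^{h,n}\|\le 2\mu\|\varepsilon(\bar\partial_t\bm e_u^{h,n})\|$ from \eqref{disc:infsup}. Only $H^2$-in-time regularity is used.

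\textbf{A smaller point on the first estimate.} With the Stokes projection, the momentum error equation for the $h$-parts is homogeneous. So there is no residual tested against $\bar\partial_t\bm e_u^{h,n}$, and no summation by parts is needed.

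The constraint residual is tested against $e_\xi^{h,n}$, and $\mathcal{E}$ alone does not control $\|e_\xi^{h,n}\|$. You need the bound $\tilde C_{is}\|e_\xi^{h,n}\|\le 2\mu\|\varepsilon(\bm e_u^{h,n})\|$ inside the energy argument, as the paper does, not only at the end. Then discrete Grönwall closes the estimate. Summation by parts there would actually hurt: it would put the difference quotient on $e_\xi^{h,n}$, which the first energy identity does not control.
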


The proof is deferred to Appendix \ref{appendix}. With the discrete errors bounded, we combine them with the standard interpolation estimates to obtain the main convergence result.

\begin{theorem}
\label{thm:main_convergence}
Let $(\bm u, \xi, \phi, \psi)$ and $(\bm u_h^n, \xi_h^n, \phi_h^n, \psi_h^n)$ be the solutions of system \eqref{weakform} and \eqref{discreteform}, respectively. Under Assumption \ref{ass2}, there holds:

\begin{align}
& \mu \| \bm e_{u}^{h,N} \|_{H^1}^2 + \| e_\xi^{h,N} \|^2 
 + K \| e_\phi^{h,N} \|_{H^1}^2 + D \| e_\psi^{h,N} \|_{H^1}^2
\nonumber \\
& \lesssim (\Delta t)^2  + h^{2k} + h^{2l} + h^2 \big( \eta_\phi \max_{1\leq n \leq N} \| \partial_t\phi^n\|^2_{H^1}  + \eta_\psi \max_{1\leq n \leq N} \| \partial_t\psi^n\|^2_{H^1}  \big).
\end{align}
\end{theorem}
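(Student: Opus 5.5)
The estimate follows by combining Lemma \ref{lem:truncation_error} with the inf-sup bound \eqref{lem:te3}. The displacement and transport components of the discrete error are already controlled there; only the total pressure component needs one extra step. Although the statement is written for the discrete errors $e^{h,N}$, we would also record the full errors $e^N = e^{I,N}+e^{h,N}$ through the triangle inequality. That second version explains the $h^{2l}$ (rather than $h^{2l+2}$) term, which is the $H^1$ rate of the elliptic projection in \eqref{interpolation22}.

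\textbf{Step 1 (displacement).} The first estimate of Lemma \ref{lem:truncation_error} bounds $\mu\|\bm e_u^{h,N}\|_{H^1}^2$ directly by the right-hand side $R := (\Delta t)^2 + h^{2k} + h^{2l+2} + h^2(\eta_\phi\max_n\|\partial_t\phi^n\|_{H^1}^2 + \eta_\psi\max_n\|\partial_t\psi^n\|_{H^1}^2)$. Since $h^{2l+2}\le C h^{2l}$ for $h\le \mathrm{diam}(\Omega)$, $R$ is dominated by the claimed right-hand side.

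\textbf{Step 2 (total pressure).} We would square \eqref{lem:te3} to get $\|e_\xi^{h,N}\|^2 \lesssim \tilde C_{is}^{-2}\mu^2\|\bm e_u^{h,N}\|_{H^1}^2$, and then insert Step 1. The constants $\mu$ and $\tilde C_{is}$ are independent of $h$ and $\Delta t$ and are absorbed into $\lesssim$. If one wants to be careful about where \eqref{lem:te3} comes from, it follows from the discrete inf-sup condition \eqref{disc:infsup} applied to the discrete error momentum equation. By the construction of the Stokes-like projection \eqref{eq:stokes_projection}, the projection error drops out of that equation. The only consistency term left is $\bm f^n-\bm f(t_n)=0$, so there is no time truncation error in the quasi-static mechanics equation.

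\textbf{Step 3 (transport variables).} The second estimate of Lemma \ref{lem:truncation_error} gives $K\|e_\phi^{h,N}\|_{H^1}^2 + D\|e_\psi^{h,N}\|_{H^1}^2 \lesssim R$. Summing Steps 1–3 proves the theorem for the discrete errors.

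For the full errors, we would add the projection estimates \eqref{interpolation11} and \eqref{interpolation22} at $t_N$. These contribute $h^{2k}$ and $h^{2l}$, with constants depending on the norms in Assumption \ref{ass2}, which guarantees the required continuity in time.

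The only potential obstacle is a bookkeeping one: \eqref{lem:te3} is stated with $\mu$ on the right-hand side, so $\mu^2$ appears against $\mu\|\cdot\|^2$. Since the constants are allowed to depend on physical parameters, this factor is simply absorbed. The $\lambda$-robustness of the $\xi$ estimate comes from the inf-sup constant rather than from $1/\lambda$, and nothing else needs to be checked.
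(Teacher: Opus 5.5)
Your proposal is correct and follows the paper's route. The discrete errors are bounded by Lemma~\ref{lem:truncation_error}, with \eqref{lem:te3} supplying the $\xi$ component, and passing to full errors via the triangle inequality with \eqref{interpolation11}--\eqref{interpolation22} is exactly the paper's one-line proof; that step is also where the $h^{2l}$ term comes from, as you note. One small caveat on your side remark: as written, \eqref{eq:stokes_projection} carries $+(\Pi_h^\xi\xi,\nabla\cdot\bm v_h)$ while \eqref{wf:momentum} has $-(\xi,\nabla\cdot\bm v)$, so the projection error only drops out after that sign is corrected. The paper's \eqref{adj:mon1} relies on the same corrected convention, and your main argument only uses the lemma as stated, so nothing breaks.
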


\begin{proof}
The proof follows directly from the triangle inequality. 
The approximation errors are bounded by the interpolation properties \eqref{interpolation11} and \eqref{interpolation22} (bounded by $O(h^{2k})$ and $O(h^{2l})$). The discrete errors are bounded by Lemma \ref{lem:truncation_error}. Summing these contributions yields the result.
\end{proof}

\subsection{Global-in-time iterative decoupled algorithm}

Although direct solution of the monolithic system \eqref{discreteform} is possible, it can be computationally expensive. To address this, iterative schemes are often applied, such as the time-stepping methods found in \cite{cai2025efficient, zhao2025optimally}. In this section, however, we focus on a ``global-in-time" approach based on the continuous splitting defined in \eqref{iter:step1}--\eqref{iter:step2}. Specifically, given the solution history from the previous iteration, we compute the new approximation by solving two decoupled sub-problems sequentially over the full time trajectory.

Let $i$ denote the iteration index. For a generic discrete variable $\chi_h$, we denote its value at time $t_n$ and iteration $i$ by $\chi_h^{n,i}$. For the initial time step $n=0$, we fix the iterative solution to match the initial conditions: $(\bm{u}^{0,i}_h, \xi^{0,i}_h, \phi^{0,i}_h, \psi^{0,i}_h) = (\bm{u}^{0}_h, \xi^{0}_h, \phi^{0}_h, \psi^{0}_h)$ for all iterations $i \ge 0$.

\paragraph{Step 1: Discrete transport sub-problem}
Given the total pressure from the previous iteration $\{ \xi^{n,i-1}_h\}_{n=1}^N$, we solve for the sequence of variables $\{(\phi_h^{n,i}, \psi_h^{n,i})\}_{n=1}^N \subset Q_h \times S_h$ such that:
\begin{subequations}
\label{iter:dis1}
\begin{align}
(c_1+\tfrac{\alpha^2}{\lambda})( \bar{\partial}_t \phi^{n,i}_h , q_h) + (\tfrac{\alpha\beta}{\lambda}-b_0)( \bar{\partial}_t \psi^{n,i}_h, q_h)  + K(\nabla\phi^{n,i}_h,\nabla q_h) + \gamma(\phi^{n,i}_h- & \psi^{n,i}_h, q_h) 
\nonumber \\
+ \eta_\phi h^2 ( \nabla \bar{\partial}_t \phi^{n,i}_h ,\nabla q_h) 
 = (g^n,q_h) + \tfrac{\alpha}{\lambda}(\bar{\partial}_t \xi^{n,i-1}_h , q_h) \quad \forall q_h \in Q_h, & \label{iter:disc_phi} \\
(c_2+\tfrac{\beta^2}{\lambda})( \bar{\partial}_t \psi^{n,i}_h , s_h) + (\tfrac{\alpha\beta}{\lambda}-b_0) ( \bar{\partial}_t \phi^{n,i}_h , s_h)  + D(\nabla\psi^{n,i}_h,\nabla s_h) + \gamma(\psi^{n,i}_h- &\phi^{n,i}_h, s_h) 
\nonumber \\  
+ \eta_\psi h^2 ( \nabla \bar{\partial}_t \psi^{n,i}_h ,\nabla s_h) 
 = (h^n,s_h) + \tfrac{\beta}{\lambda}( \bar{\partial}_t \xi^{n,i-1}_h, s_h) \quad \forall s_h \in S_h. & 
 \label{iter:disc_psi}
\end{align}
\end{subequations}
Note that while the variables are coupled at each specific time step $t_n$, the time-stepping nature of \eqref{iter:dis1} means the system is solved sequentially from $n=1$ to $N$.

\paragraph{Step 2: Discrete mechanics sub-problem}
Using the updated transport variables $\{(\phi_h^{n,i}, \psi_h^{n,i})\}_{n=1}^N$ obtained in \textit{Step 1}, we solve for the sequence of mechanics variables $\{(\bm{u}_h^{n,i}, \xi_h^{n,i})\}_{n=1}^N \subset \bm{V}_h \times W_h$ satisfying:
\begin{subequations}
\label{iter:dis2}
\begin{align}
& 2\mu(\varepsilon(\bm u_h^{n,i}),\varepsilon(\bm v_h)) - (\xi_h^{n,i},\nabla\cdot\bm v_h) = (\bm f^n, \bm v_h) & \forall \bm{v}_h \in \bm{V}_h,  \label{iter:disc_u} 
\\
& (\nabla\cdot\bm u_h^{n,i}, w_h) +\tfrac{1}{\lambda} (\xi_h^{n,i}, w_h) = \tfrac{\alpha}{\lambda} (\phi_h^{n,i}, w_h) + \tfrac{\beta}{\lambda} (\psi_h^{n,i}, w_h) & \forall w_h \in W_h. \label{iter:disc_xi}
\end{align}
\end{subequations}
Since the mechanics sub-problem \eqref{iter:dis2} is quasi-static (no time derivatives), the steps are decoupled and can be solved in parallel.

\vspace{0.5cm}

This iterative procedure repeats until the relative error between consecutive iterations satisfies a prescribed tolerance or a maximum number of iterations is reached.
To analyze the convergence, we define the discrete errors between the sequence $\{(\bm u_h^{n, i}, \xi_h^{n, i}, \phi_h^{n, i}, \psi_h^{n, i})\}$ obtained from \eqref{iter:dis1}--\eqref{iter:dis2} and the monolithic discrete solution $(\bm u_h^n, \xi_h^n, \phi_h^n, \psi_h^n)$ of \eqref{discreteform} as:
\begin{align}
\label{errordef}
\bm e_{u}^{n,i} = \bm{u}_h^{n,i} - \bm{u}_h^n, \quad e_{\xi}^{n,i} = \xi_h^{n,i} - \xi_h^n, \quad
e_{\phi}^{n,i} = \phi_h^{n,i} - \phi_h^n, \quad e_{\psi}^{n,i} = \psi_h^{n,i} - \psi_h^n.
\end{align}
Analogous to the continuous case, the following theorem establishes the convergence of the fully discrete scheme, adapting the analysis by replacing continuous time derivatives with backward difference quotients and integrals with finite summations.

\begin{theorem}
The fully discrete iterative scheme \eqref{iter:dis1}--\eqref{iter:dis2} is strictly contractive. Specifically, the following discrete error estimate holds:
\begin{align}
\bigg( \Delta t \sum_{n=1}^N \| \bar{\partial}_t e_\xi^{n,i} \|^2 \bigg)^{1/2} \le L_{\mathrm{dis}} \bigg( \Delta t \sum_{n=1}^N \| \bar{\partial}_t e_\xi^{n,i-1} \|^2 \bigg)^{1/2}, \label{dis:main1}
\end{align}
where the discrete contraction factor is given by
\begin{align}
    L_{\mathrm{dis}} := \frac{1}{\sqrt{(1+2\lambda C_{\tilde{\mu}})(1 + 2\lambda h^2 C_{\mathrm{stab}})}} < 1, \label{def:Ldis}
\end{align}
with $C_{\tilde{\mu}},  C_{\mathrm{stab}}$ being positive constants independent of mesh size $h$ and time step size $\Delta t$.
As $i \to \infty$, the sequence of discrete iterates converges to the monolithic discrete solution as:
\begin{align*}
    \max_{1 \le n \le N} \|\bm{e}_{u}^{n,i}\|_{H^1} &\to 0, \quad
    \max_{1 \le n \le N} \|e_{\xi}^{n,i}\| \to 0, 
    \\
    \max_{1 \le n \le N} \|e_{\phi}^{n,i}\|_{H^1} &\to 0, \quad
    \max_{1 \le n \le N} \|e_{\psi}^{n,i}\|_{H^1} \to 0.
\end{align*}
Consequently, the discrete iterative scheme converges globally.
\end{theorem}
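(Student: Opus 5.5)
The plan is to mimic the three-step proof of Theorem~\ref{thm:convergence}, with time derivatives replaced by $\bar{\partial}_t$ and integrals by $\Delta t\sum_{n=1}^N$. The new ingredient is the stabilization term $\eta_\phi h^2(\nabla\bar{\partial}_t\cdot,\nabla\cdot)$. Combined with an inverse inequality, this term should produce the extra factor $(1+2\lambda h^2 C_{\mathrm{stab}})$ in $L_{\mathrm{dis}}$.

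\textbf{Step 1 (transport).} Subtract \eqref{dis:mon3}--\eqref{dis:mon4} from \eqref{iter:disc_phi}--\eqref{iter:disc_psi}. The error equations have right-hand sides $\tfrac{\alpha}{\lambda}(\bar{\partial}_t e_\xi^{n,i-1},q_h)$ and $\tfrac{\beta}{\lambda}(\bar{\partial}_t e_\xi^{n,i-1},s_h)$, and the initial errors vanish since $e^{0,i}=0$. Test with $q_h=\bar{\partial}_t e_\phi^{n,i}$ and $s_h=\bar{\partial}_t e_\psi^{n,i}$.
\begin{itemize}
\item The mass terms regroup exactly as in \eqref{con:eq3}.
\item The gradient and exchange terms give the telescoping inequality $2(a^n,a^n-a^{n-1})\ge \|a^n\|^2-\|a^{n-1}\|^2$, which replaces $\tfrac12\tfrac{d}{dt}$.
\item The stabilization gives $\eta_\phi h^2\|\nabla\bar{\partial}_t e_\phi^{n,i}\|^2+\eta_\psi h^2\|\nabla\bar{\partial}_t e_\psi^{n,i}\|^2$.
\end{itemize}
Next, write $z^n:=\alpha\bar{\partial}_t e_\phi^{n,i}+\beta\bar{\partial}_t e_\psi^{n,i}$. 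We lower-bound the stabilization by a multiple of $h^2\|z^n\|^2$ using $\|\nabla q_h\|\ge C_P^{-1}\|q_h\|$ (Poincaré, since $Q_h\subset H^1_0$) and $\|z\|^2\le 2\alpha^2\|q\|^2+2\beta^2\|s\|^2$. This gives $\eta_\phi h^2\|\nabla q\|^2+\eta_\psi h^2\|\nabla s\|^2\ge 2h^2C_{\mathrm{stab}}\|z\|^2$ with $C_{\mathrm{stab}}$ depending on $\eta$, $\alpha$, $\beta$ and $C_P$. If $\eta_\phi=\eta_\psi=0$, the factor degenerates to $1$, which is still fine. After Young's inequality on the right-hand side and summation, this should yield
\[
\big(1+2\lambda h^2C_{\mathrm{stab}}\big)\,\Delta t\sum_{n=1}^N\|z^n\|^2\le \Delta t\sum_{n=1}^N\|\bar{\partial}_t e_\xi^{n,i-1}\|^2 .
\]
To reach this constant, I would use Young's inequality in the form $\tfrac1\lambda(x,z)\le\tfrac{1}{2\lambda}\|x\|^2+\tfrac{1}{2\lambda}\|z\|^2$. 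The retained terms are then $\tfrac{1}{2\lambda}\|z\|^2+2h^2C_{\mathrm{stab}}\|z\|^2$, and multiplying by $2\lambda$ gives exactly the displayed factor.

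\textbf{Step 2 (mechanics).} Subtract \eqref{dis:mon1}--\eqref{dis:mon2} from \eqref{iter:dis2} and take differences in $n$; this is legitimate since \eqref{iter:dis2} holds at each $n$ and is linear. Test with $\bar{\partial}_t\bm e_u^{n,i}$ and $\bar{\partial}_t e_\xi^{n,i}$ to get the discrete analogue of \eqref{eq:mecha_identity}. The discrete inf-sup condition \eqref{disc:infsup} gives $C_{\tilde\mu}\|\bar{\partial}_t e_\xi^{n,i}\|^2\le 2\mu\|\varepsilon(\bar{\partial}_t\bm e_u^{n,i})\|^2$, with $C_{\tilde\mu}$ proportional to $\tilde C_{is}^2/\mu$ and hence $h$-independent. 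Then $(C_{\tilde\mu}+\tfrac1{2\lambda})\Delta t\sum\|\bar{\partial}_t e_\xi^{n,i}\|^2\le\tfrac1{2\lambda}\Delta t\sum\|z^n\|^2$. Chaining this with Step 1 gives \eqref{dis:main1} with \eqref{def:Ldis}.

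\textbf{Step 3 (convergence).} Since $e_\xi^{0,i}=0$, the discrete Cauchy--Schwarz inequality gives $\|e_\xi^{m,i}\|^2=\|\Delta t\sum_{n\le m}\bar{\partial}_t e_\xi^{n,i}\|^2\le T_f\,\Delta t\sum_n\|\bar{\partial}_t e_\xi^{n,i}\|^2\to0$ uniformly in $m$. The displacement follows from Korn's inequality applied to the differenced momentum error equation, summed in the same way. For $e_\phi$ and $e_\psi$, the Step 1 sum truncated at an arbitrary $m\le N$ (all terms are nonnegative) bounds $K\|\nabla e_\phi^{m,i}\|^2+D\|\nabla e_\psi^{m,i}\|^2$ by $\Delta t\sum\|\bar{\partial}_t e_\xi^{n,i-1}\|^2/\lambda$. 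Poincaré's inequality then gives the $\max_n H^1$ convergence.

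The main obstacle is Step 1. The stabilization must be bounded below by $h^2\|z\|^2$ with an $h$-independent constant while keeping the sharp Young constant that produces the product form of $L_{\mathrm{dis}}$. The remaining steps are routine discrete transcriptions of Theorem~\ref{thm:convergence}.
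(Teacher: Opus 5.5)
Your proposal is correct and follows essentially the same route as the paper: a transport energy estimate that uses the stabilization lower bound \eqref{dis:ineq2}, the inf-sup-based mechanics estimate \eqref{disc:mech_sum}, and chaining the two. Your Poincaré argument supplies the justification for \eqref{dis:ineq2} that the paper only asserts, and two slips are harmless: the lower bound on the stabilization comes from Poincaré's inequality (as you actually use in Step 1), not from an inverse inequality as your opening paragraph says, and with your normalization $\eta_\phi h^2\|\nabla q\|^2+\eta_\psi h^2\|\nabla s\|^2\ge 2h^2C_{\mathrm{stab}}\|z\|^2$, multiplying by $2\lambda$ gives $1+4\lambda h^2C_{\mathrm{stab}}$ rather than $1+2\lambda h^2C_{\mathrm{stab}}$, which only strengthens the displayed inequality.
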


\begin{proof}
We follow the structure of the proof of the continuous case.

\noindent
\textbf{1. Discrete transport error equations.}
Subtracting the monolithic equations \eqref{dis:mon1}--\eqref{dis:mon2} from the discrete transport sub-problem \eqref{iter:dis1} yields the error equations:
\begin{align}
& (c_1+\tfrac{\alpha^2}{\lambda})( \bar{\partial}_t e_\phi^{n,i} , q_h) + (\tfrac{\alpha\beta}{\lambda}-b_0)( \bar{\partial}_t e_\psi^{n,i}, q_h)  + K(\nabla e_\phi^{n,i},\nabla q_h) \nonumber \\
& + \gamma(e_\phi^{n,i}-e_\psi^{n,i}, q_h) 
+ \eta_\phi h^2 ( \nabla \bar{\partial}_t e_\phi^{n,i} ,\nabla q_h) 
 = \tfrac{\alpha}{\lambda}(\bar{\partial}_t e_\xi^{n,i-1}, q_h), \label{err:disc_phi} \\
& (c_2+\tfrac{\beta^2}{\lambda})( \bar{\partial}_t e_\psi^{n,i} , s_h) + (\tfrac{\alpha\beta}{\lambda}-b_0) ( \bar{\partial}_t e_\phi^{n,i} , s_h)  + D(\nabla e_\psi^{n,i},\nabla s_h) \nonumber \\ 
& + \gamma(e_\psi^{n,i}-e_\phi^{n,i}, s_h) 
+ \eta_\psi h^2 ( \nabla \bar{\partial}_t e_\psi^{n,i} ,\nabla s_h) 
 = \tfrac{\beta}{\lambda}( \bar{\partial}_t e_\xi^{n,i-1}, s_h). \label{err:disc_psi}
\end{align}
We choose the test functions $q_h=\bar{\partial}_t e_\phi^{n,i}$ in \eqref{err:disc_phi} and $s_h=\bar{\partial}_t e_\psi^{n,i}$ in \eqref{err:disc_psi}. Summing the two equations yields:
\begin{align}
& (c_1-b_0)  \| \bar{\partial}_t e_\phi^{n,i} \|^2 + (c_2-b_0) \|  \bar{\partial}_t e_\psi^{n,i} \|^2
+ b_0 \| \bar{\partial}_t e_\phi^{\,n,i} - \bar{\partial}_t e_\psi^{\,n,i}\|^2 + \tfrac{1}{\lambda} \| \alpha \bar{\partial}_t  e_{\phi}^i +  \beta \bar{\partial}_t {e}_\psi^i \|^2 \nonumber\\
& + K 
(\nabla e_\phi^{n,i},\nabla \bar{\partial}_t e_\phi^{n,i} )
 + D ( \nabla e_\psi^{n,i} , \nabla \bar{\partial}_t e_\psi^{n,i} ) + \gamma  (e_\phi^{n,i}-e_\psi^{n,i}, \bar{\partial}_t e_\phi^{n,i} - \bar{\partial}_t e_\psi^{n,i})
\nonumber\\
& + \eta_\phi h^2 \| \nabla \bar{\partial}_t e_\phi^{n,i} \|^2 + \eta_\psi h^2 \| \nabla \bar{\partial}_t e_\psi^{n,i} \|^2  = \tfrac{1}{\lambda} (  \bar{\partial}_t e_\xi^{n,i-1}, \alpha  \bar{\partial}_t e_\phi^{n,i}+\beta  \bar{\partial}_t e_\psi^{n,i} ).
\label{disc:transport_step_energy}
\end{align}
Using Cauchy--Schwarz and Young inequalities on the right-hand side of \eqref{disc:transport_step_energy}, we have:
\begin{align}
    \tfrac{1}{\lambda} ( \bar{\partial}_t e_\xi^{n,i-1}, \alpha  \bar{\partial}_t e_\phi^{n,i}+\beta  \bar{\partial}_t e_\psi^{n,i} ) \leq \tfrac{1}{2\lambda} \|\bar{\partial}_t e_\xi^{n,i-1}\|^2 + \tfrac{1}{2\lambda} \| \alpha  \bar{\partial}_t e_\phi^{n,i} +\beta  \bar{\partial}_t e_\psi^{n,i} \|^2. \label{disc:transport_step_energy_ieq}
\end{align}
Under Assumption \ref{assumption1}, the first three terms of \eqref{disc:transport_step_energy} are non-negative and can therefore be dropped.
For the diffusion and coupling terms, we apply the algebraic identity $( x, x-y) = \tfrac12 (\|x\|^2-\|y\|^2 + \|x-y\|^2 )$ and drop the non-negative dissipation terms $\|x-y\|^2$ to derive 
\begin{align}
& \tfrac{1}{2\lambda} \| \alpha \bar{\partial}_t  e_{\phi}^i +  \beta \bar{\partial}_t {e}_\psi^i \|^2 + \eta_\phi h^2 \| \nabla \bar{\partial}_t e_\phi^{n,i} \|^2 + \eta_\psi h^2 \| \nabla \bar{\partial}_t e_\psi^{n,i} \|^2 
\nonumber \\ 
& \quad + \tfrac{K}{2\Delta t} \| \nabla e_\phi^{n,i} \|^2 + \tfrac{D}{2 \Delta t} \| \nabla e_\psi^{n,i} \|^2 + \tfrac{\gamma}{2 \Delta t} \|e_\phi^{n,i}-e_\psi^{n,i}\|^2
\nonumber \\ 
& \leq \tfrac{1}{2\lambda} \|\bar{\partial}_t e_\xi^{n,i-1}\|^2 + \tfrac{K}{2 \Delta t} \| \nabla e_\phi^{n-1,i} \|^2 + \tfrac{D}{2 \Delta t} \| \nabla e_\psi^{n-1,i} \|^2 + \tfrac{\gamma}{2 \Delta t} \|e_\phi^{n-1,i}-e_\psi^{n-1,i}\|^2. \label{dis:ineq1}
\end{align}
We note that the stabilization terms provide additional control. Specifically, there exists a constant $C_{\text{stab}} = C_{\text{stab}}(\alpha,\beta,\eta_\phi,\eta_\psi) \ge 0$ such that 
\begin{align}
C_{\text{stab}} \| \alpha \bar{\partial}_t  e_{\phi}^i +  \beta \bar{\partial}_t {e}_\psi^i \|^2 \leq \eta_\phi \| \nabla \bar{\partial}_t e_\phi^{n,i} \|^2 + \eta_\psi \| \nabla \bar{\partial}_t e_\psi^{n,i} \|^2.  \label{dis:ineq2}
\end{align}
Combing the inequalities \eqref{dis:ineq1} and \eqref{dis:ineq2}, we then apply the summation operator $2 \Delta t \sum_{n=1}^N$. Note that the diffusion and coupling terms form telescoping sums; since the initial errors are zero, these sums collapse, leaving only the contributions at the final time step $N$. This yields the global bound:
\begin{align}
    & K \| \nabla e_\phi^{N,i} \|^2 + D \| \nabla e_\psi^{N,i} \|^2 + \gamma \|e_\phi^{N,i}-e_\psi^{N,i}\|^2   \nonumber \\
    & + ( 2 h^2 C_{\text{stab}} + \tfrac{1}{\lambda} ) \Delta t \sum_{n=1}^N \| \alpha \bar{\partial}_t  e_{\phi}^i + \beta \bar{\partial}_t {e}_\psi^i \|^2 \leq \tfrac{1}{\lambda} \Delta t \sum_{n=1}^N \|\bar{\partial}_t e_\xi^{n,i-1}\|^2.
    \label{disc:transport_sum}
\end{align}

\noindent
\textbf{2. Discrete mechanics error equations.} Subtracting the monolithic equations \eqref{dis:mon3}--\eqref{dis:mon4} from the discrete mechanics sub-problem \eqref{iter:dis2} and applying the backward difference operator $\bar{\partial}_t(\cdot)$ to the resulting error equations yields:
\begin{align}
& 2\mu(\varepsilon(\bar{\partial}_t \bm e_u^{n,i}),
\varepsilon(\bm v_h))
- (\bar{\partial}_t e_\xi^{n,i},
\nabla\cdot \bm v_h)=0, \label{disc:mech1}
\\
& (\nabla\cdot \bar{\partial}_t \bm e_u^{n,i},w_h)
+ \tfrac{1}{\lambda}(\bar{\partial}_t e_\xi^{n,i},w_h)
= \tfrac{1}{\lambda}(\alpha\bar{\partial}_t e_\phi^{n,i}
+ \beta\bar{\partial}_t e_\psi^{n,i},w_h). \label{disc:mech2}
\end{align}
We choose $\bm v_h=\bar{\partial}_t \bm e_u^{n,i}$ in \eqref{disc:mech1} and $w_h=\bar{\partial}_t e_\xi^{n,i}$ in  \eqref{disc:mech2}. Adding the two equations and applying the Cauchy--Schwarz and Young inequalities, we get:
\begin{align} 
2\mu\|\varepsilon(\bar{\partial}_t \bm e_u^{n,i})\|^2
+ \tfrac{1}{\lambda}\|\bar{\partial}_t e_\xi^{n,i}\|^2 & = \tfrac{1}{\lambda}(\alpha\bar{\partial}_t e_\phi^{n,i}
+ \beta\bar{\partial}_t e_\psi^{n,i},w_h) \nonumber \\
& \le \tfrac{1}{2\lambda}  \|\bar{\partial}_t {e}_{\xi}^{i}\|^2 + \tfrac{1}{2\lambda} \|\alpha\bar{\partial}_t {e}_\phi^i + \beta\bar{\partial}_t {e}_\psi^i\|^2.
\label{dis:eq6}
\end{align}
Using the discrete inf-sup condition \eqref{disc:infsup} to \eqref{disc:mech1} yields
\begin{align*}
\tilde{C}_{is} \| \bar{\partial}_t  e_\xi^{n,i} \| &\leq \sup_{\bm 0 \not = \bm{v}_h\in \bm{V}_h} \frac{(\bar{\partial}_t  e_\xi^{n,i}, \nabla \cdot \bm{v}_h)}{\| \bm{v}_h \|_{H^1}} 
= \sup_{\bm 0 \not = \bm{v}_h\in \bm{V}_h} \frac{2\mu (\varepsilon(\bar{\partial}_t {\bm e}_u^{n,i}), \varepsilon(\bm{v}_h))}{\| \bm{v}_h \|_{H^1}} 
\leq 2\mu \| \varepsilon(\bar{\partial}_t {\bm e}_u^{n,i}) \|,
\end{align*}
implying that there exists \( C_{\tilde{\mu}} >0\) (independent of \(h\) and \(\Delta t\)) such that
\begin{align}
 C_{\tilde{\mu}} \|\bar{\partial}_t  e_\xi^{n,i}\|^2 \le 2\mu\|\varepsilon(\bar{\partial}_t {\bm e}_u^{n,i})\|^2. \label{dis:xibound}
\end{align}
Substituting \eqref{dis:xibound} into \eqref{dis:eq6} and applying the summation operator $\Delta t \sum_{n=1}^N$ yields:
\begin{equation}
(2\lambda C_{\tilde{\mu}}+1)
\Delta t \sum_{n=1}^N
\|\bar{\partial}_t e_\xi^{n,i}\|^2
\le
\Delta t \sum_{n=1}^N
\|\alpha\bar{\partial}_t e_\phi^{n,i}
+ \beta\bar{\partial}_t e_\psi^{n,i}\|^2. \label{disc:mech_sum}
\end{equation}

\medskip
\noindent\textbf{3. Contraction and convergence.} Since $C_{\tilde{\mu}}, \lambda > 0$ and $C_{\mathrm{stab}} \ge 0$, we have $L_{\mathrm{dis}}$ defined in \eqref{def:Ldis} strictly smaller than $1$.
Combining \eqref{disc:transport_sum} and \eqref{disc:mech_sum}, we obtain the global inequality:
\begin{align}
\Delta t \sum_{n=1}^N
\|\bar{\partial}_t e_\xi^{n,i}\|^2
\le
L_{\mathrm{dis}}^2 \Delta t \sum_{n=1}^N
\|\bar{\partial}_t e_\xi^{n,i-1}\|^2, \label{disc:contraction}
\end{align}
establishing the contraction property \eqref{dis:main1}. Finally, the convergence of the error components $\bm e_u^{n,i}$, $e_\xi^{n,i}$, $e_\phi^{n,i}$, and $e_\psi^{n,i}$ follows directly from the stability bounds derived above using discrete summation arguments analogous to the continuous case.
\end{proof}

\section{Numerical experiments}
\label{sec:4}

In this section, we present numerical experiments to validate the theoretical analysis and performance of the proposed finite element schemes. We assess the convergence rates of the errors in the respective norms and verify the stability of the method. All computations are performed using FEniCSx with multiphenicsx \cite{baratta2023dolfinx, multiphenicsx}.

\subsection{Convergence verification}
\label{subsec:convergence}

To evaluate the accuracy of the proposed algorithms, we employ the method of manufactured solutions. We consider the unit square domain $\Omega = (0,1) \times (0,1)$ over the time interval $(0, T_f]$. Consistent with the boundary partition definitions in \eqref{boundaries}, we define the Neumann boundary $\Gamma_N:=\{(1,y); 0\leq y\leq1\}$ as the right edge of the domain and the Dirichlet boundary $\Gamma_D:=\Gamma \setminus \Gamma_N$ as the remaining part.
We construct the analytical solutions as follows:
\begin{subequations}
\label{eq:exact_solutions}
\begin{align}
    \bm{u}(x,y,t) &= 
    \begin{bmatrix}
    \sin{(\pi x t)} \cos{(\pi y t)} \\
    \cos{(\pi x t)} \sin{(\pi y t)}
    \end{bmatrix} x y (1-x)^2(1-y), 
    \\
    \phi(x,y,t) &= \cos{(t + x - y)} x y (1-x)^2(1-y),
    \\
    \psi(x,y,t) &= \sin{(t + x - y)} x y (1-x)^2(1-y).
\end{align}
\end{subequations}
The exact solution for the total pressure variable is derived consistently from the reformulation $\xi = -\lambda \nabla \cdot \bm{u} + \alpha \phi + \beta \psi$. 
The source terms $\bm{f}$, $g$, and $h$, as well as the initial and boundary conditions, are determined by substituting these expressions into the governing equations. We set $\eta_\phi = \eta_\psi = 0$. The physical parameters are set to:
\[
\mu = \lambda = K = D = \alpha = \beta = c_1 = c_2 = 1, \ \text{and} \quad b_0 = \gamma = 0.1.
\]
\paragraph{Verification of temporal convergence}
We first verify the temporal convergence rate of the monolithic scheme provided in \eqref{discreteform} by setting a fixed final time 
$T_f = 1.0$ and employing a sufficiently fine spatial mesh with $h=1/64$ and polynomial orders $k=l=3$. The simulations are performed using a sequence of time steps $\Delta t \in \{1/4, 1/8, 1/16, 1/32\}$. The results, summarized in Table \ref{tab:time_convergence}, confirm the expected first-order convergence $O(\Delta t)$ for all field variables, consistent with the backward Euler time discretization.

\vspace{-0.5cm}

\begin{table}[h]
\centering
\caption{Temporal convergence results at $t_N$ using a fixed spatial mesh.}
\label{tab:time_convergence}
\begin{tabular}{ccccccccc}
\toprule
$\Delta t$ & $\|\bm{u}^N - \bm{u}_h^N \|_{H^1}$ & Rate & $\|\xi^N  - \xi_h^N \|$ & Rate & $\|\phi^N - \phi_h^N\|_{H^1}$ & Rate & $\|\psi^N - \psi_h^N\|_{H^1}$ & Rate \\
\midrule
$1/4$  & $4.459 \times 10^{-4}$ & --    & $9.919 \times 10^{-4}$ & --    & $5.239 \times 10^{-3}$ & --    & $5.256 \times 10^{-3}$ & --      \\
$1/8$  & $2.389 \times 10^{-4}$ & 0.90  & $5.447 \times 10^{-4}$ & 0.87  & $2.777 \times 10^{-3}$ & 0.92  & $2.780 \times 10^{-3}$ & 0.92    \\
$1/16$ & $1.245 \times 10^{-4}$ & 0.94  & $2.878 \times 10^{-4}$ & 0.92  & $1.436 \times 10^{-3}$ & 0.95  & $1.436 \times 10^{-3}$ & 0.95    \\
$1/32$ & $6.365 \times 10^{-5}$ & 0.97  & $1.482 \times 10^{-4}$ & 0.96  & $7.312 \times 10^{-4}$ & 0.97  & $7.306 \times 10^{-4}$ & 0.98    \\
\bottomrule
\end{tabular}
\end{table}

\vspace{-1.0cm}

\paragraph{Verification of spatial convergence}
Next, we assess the spatial convergence properties of the monolithic scheme provided in \eqref{discreteform}. We fix a small final time $T_f = 0.01$ and use a sufficiently small time step $\Delta t = T_f/64$ to ensure that temporal errors are negligible. The polynomial orders are set to $k=l=2$. We compute the errors on a sequence of refined meshes with $h \in \{1/4, 1/8, 1/16, 1/32\}$. As shown in Table \ref{tab:spatial_convergence}, we observe optimal second-order convergence rates, $O(h^2)$, for all variables in their respective norms.

\begin{table}[h]
\centering
\caption{Spatial convergence results at $t_N$ using a fixed time step size.}
\label{tab:spatial_convergence}
\begin{tabular}{ccccccccc}
\toprule
$h$ & $\|\bm{u}^N - \bm{u}_h^N \|_{H^1}$ & Rate & $\|\xi^N - \xi_h^N \|$ & Rate & $\|\phi^N - \phi_h^N\|_{H^1}$ & Rate & $\|\psi^N - \psi_h^N\|_{H^1}$ & Rate \\
\midrule
$1/4$ & $5.610\times 10^{-4}$ & -- & $3.332\times 10^{-3}$ & -- & $5.914\times 10^{-3}$ & -- & $5.983\times 10^{-3}$ & -- \\
$1/8$ & $1.495\times 10^{-4}$ & 1.91 & $9.170\times 10^{-4}$ & 1.86 & $1.644\times 10^{-3}$ & 1.85 & $1.646\times 10^{-3}$ & 1.86 \\
$1/16$ & $3.757\times 10^{-5}$ & 1.99 & $2.341\times 10^{-4}$ & 1.97 & $4.189\times 10^{-4}$ & 1.97 & $4.190\times 10^{-4}$ & 1.97 \\
$1/32$ & $9.381\times 10^{-6}$ & 2.00 & $5.883\times 10^{-5}$ & 1.99 & $1.051\times 10^{-4}$ & 1.99 & $1.052\times 10^{-4}$ & 1.99 \\
\bottomrule
\end{tabular}
\end{table}

\paragraph{Convergence of the global-in-time iterative decoupled algorithm}

We now examine the convergence properties of the global-in-time iterative decoupled algorithm presented in \eqref{iter:dis1}--\eqref{iter:dis2} by setting the final time $T_f = 1.0$ with $\Delta t = T_f/32$ and employing the mesh size $h = 1/16$ and polynomial orders $k = l = 2$. To evaluate the convergence behavior, we compute the relative iterative errors, as defined in \eqref{errordef}, at the final time step $N$. Figure \ref{fig:iterative_convergence} illustrates the reduction of the iterative error with respect to the iteration count. We observe that the error decays monotonically toward zero as the iteration count increases, exhibiting a linear rate of convergence.

\begin{figure}[htbp]
    \centering
    \includegraphics[width=0.68\linewidth]{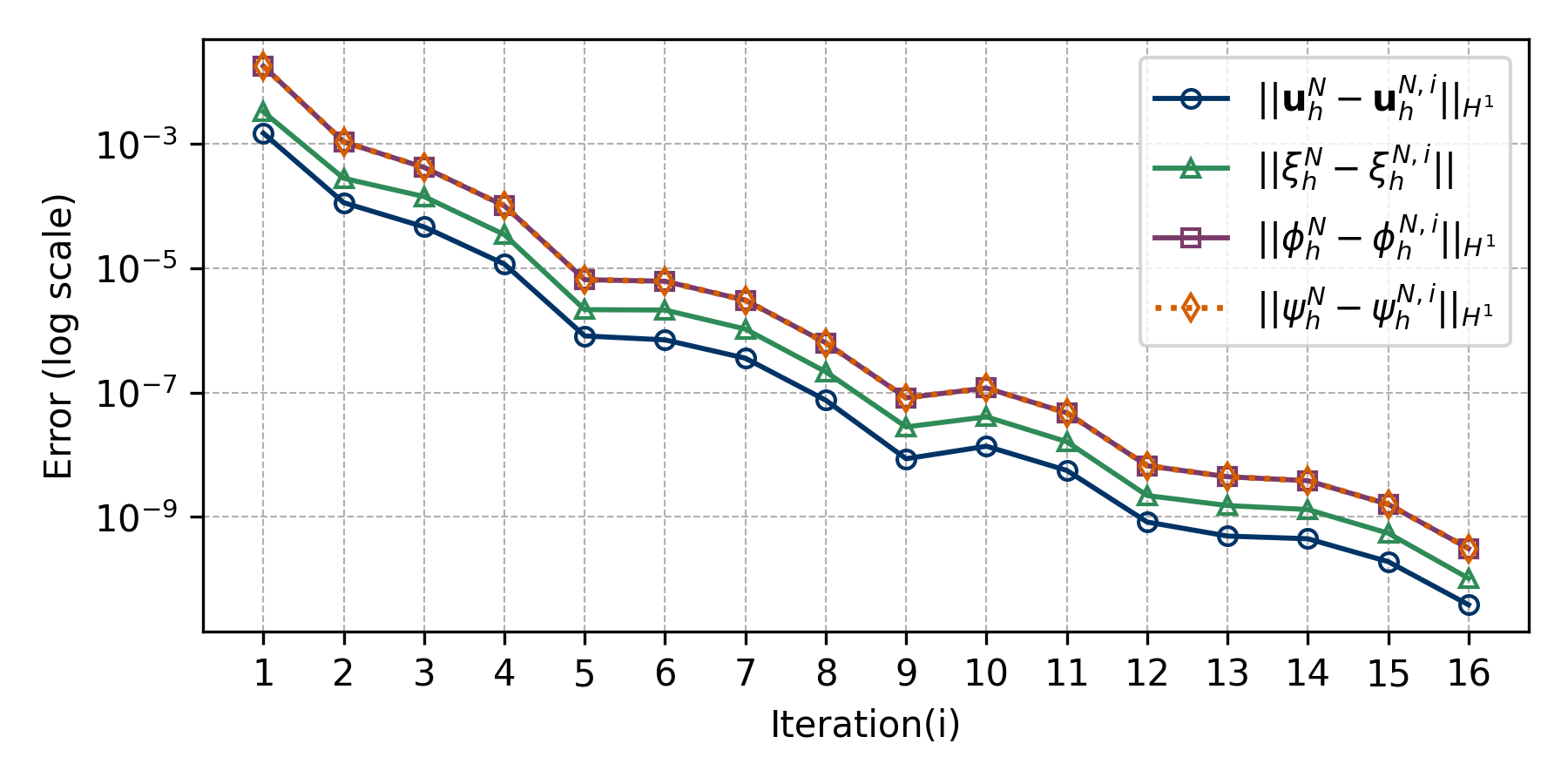}
    \caption{Convergence behavior of the relative iterative errors}
    \label{fig:iterative_convergence}
\end{figure}

\subsection{Simulation of the generalized Barry-Mercer problem}
\label{subsec:barry_mercer}

In this section, we consider a generalized version of the Barry-Mercer problem \cite{barry1999exact,cai2023some} to evaluate the robustness of the proposed schemes. The problem involves a  point source located at $(x_0,y_0) = (0.25, 0.25)$ in the unit square domain $\Omega = (0,1) \times (0,1)$. 
The boundary segments $\partial \Omega$ are: 
$\Gamma_1=\{(1,y); 0\leq y\leq1\}$, $\Gamma_2=\{(x,0);0\leq x\leq1\}$, $\Gamma_3=\{(0,y);0\leq y\leq1\}$, $\Gamma_4=\{(x,1);0\leq x\leq1\}$. Let $\bm{u}=[u_1,u_2]^T$. The boundary conditions on $\partial \Omega \times (0,T]$ are specified as follows:
\begin{align*}
    \frac{\partial u_1}{\partial x} = 0, \ u_2 = 0, \ \phi = 0, \ \psi = 0 \quad & \mbox{on} \ \Gamma_j \times (0,T], \ j=1,3,  \\
	\frac{\partial u_2}{\partial y} = 0, \ u_1 = 0, \ \phi = 0, \ \psi = 0  \quad & \mbox{on} \ \Gamma_j \times (0,T], \ j=2,4.
\end{align*}
The initial conditions are set to zero for all variables. 
The body force is set to $\bm{f} = \bm{0}$, while the source terms are defined as:
\begin{equation*}
    g(x,y, t) = h(x,y, t) = 2 \sin(\omega t) \delta(x - x_0) \delta(y - y_0).
\end{equation*}
where $\delta (\cdot)$ represents the Dirac function. We employ a uniform triangular mesh with $h = 1/64$ and polynomial orders $k=2, l=1$.

In the first test, we evaluate the performance of the schemes under the influence of the stabilization parameters. The material properties are selected as:
\[
E = 10^5, \quad \nu = 0.1, \quad \alpha_1 = \alpha_2 = 0.5, \quad K = D = 10^{-6}, \quad c_1 = c_2 = b_0 = \gamma = 0.0.
\] 
The simulation is performed for a single time step with $\Delta t = T_f = \frac{\pi}{2} \times 10^{-9}$. The global-in-time algorithm is configured with a fixed count of 30 iterations. We investigate the numerical results using the proposed stabilization scaling $\eta_\phi = \eta_\psi = [32(\mu+2\lambda)h^2]^{-1}$. The potentials $\phi$ and $\psi$ are identical under this configuration, as illustrated in Fig. \ref{fig:barry_mercer_results2}, no non-physical oscillations are observed for either the monolithic or iterative algorithms.

\begin{figure}[htbp]
    \centering
    \includegraphics[width=1\linewidth]{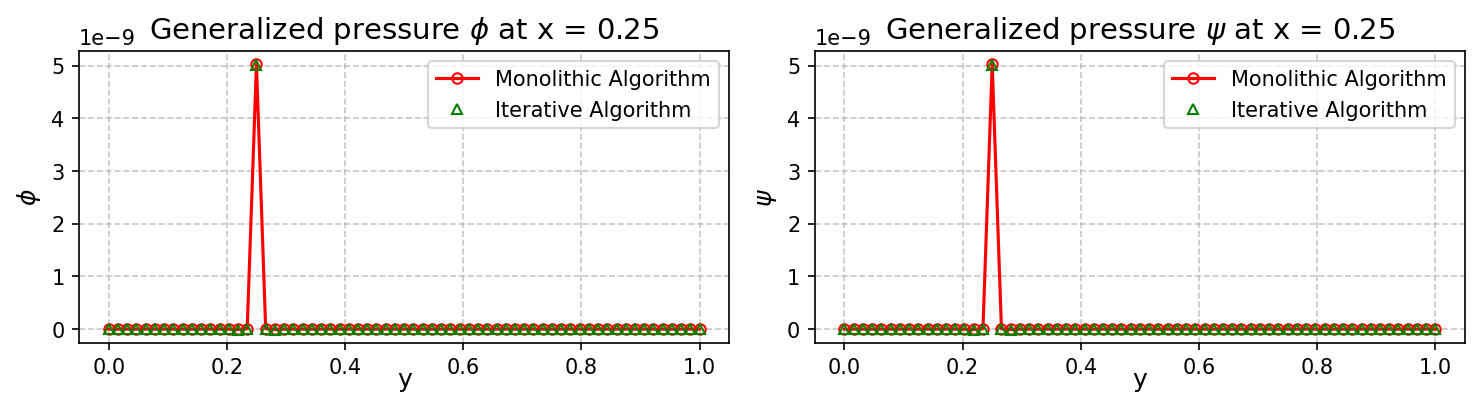}
    \caption{Cross-section of the generalized pressures $\phi$ and $\psi$ in the generalized Barry-Mercer problem.}
    \label{fig:barry_mercer_results2}
\end{figure}

In the second test, the physical parameters are chosen as follows:
$$\mu = 0.4, \quad \lambda = 0.2, \quad \alpha_1 = \alpha_2 = 0.5, \quad K = D = 1.0, \quad c_1 = c_2 = b_0 = \gamma = 0.0.$$
We employ a uniform triangular mesh with size $h = 1/64$ and polynomial orders $k = 2$, $l = 1$. The simulation runs to a final time $T_f = \pi/2$ with a time step size $\Delta t = T_f/20$. Following \cite{rodrigo2016stability}, we set the stabilization parameters as $\eta_\phi = \eta_\psi = \frac{1}{32(\mu+2\lambda)h^2}$.
Figure \ref{fig:barry_mercer_results} presents the numerical solutions along the vertical cross-section $x=0.25$ at the final time $t=\pi/2$. Here, the global-in-time algorithm uses a fixed number of 5 iterations. The numerical solution remains stable and convergent, with both algorithms showing excellent agreement with the exact solution.
\begin{figure}[htbp]
    \centering
    \includegraphics[width=1\linewidth]{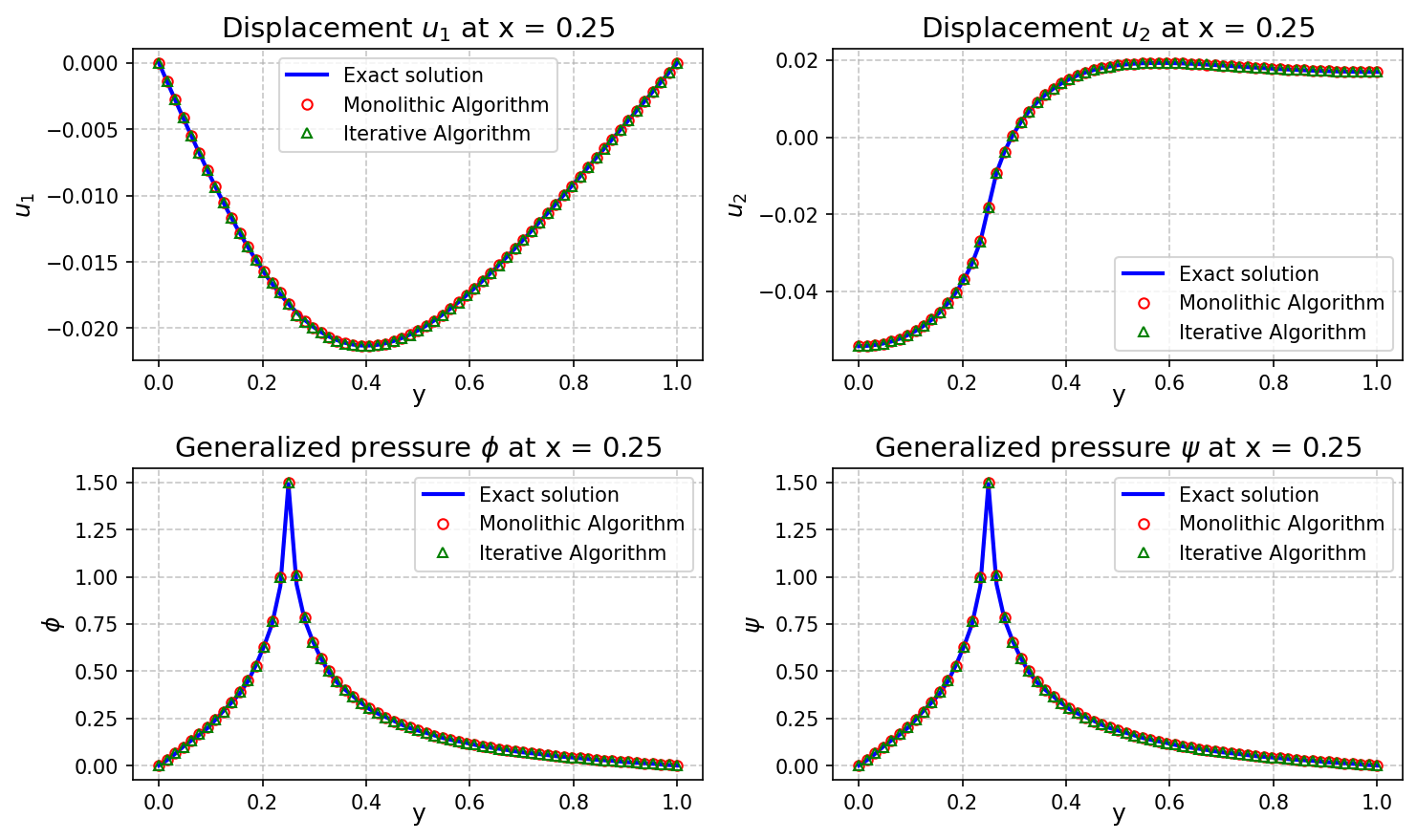}
    \caption{Numerical solution of the generalized Barry-Mercer problem at $t=\pi/2$.}
    \label{fig:barry_mercer_results}
\end{figure}

\section{Conclusions}
\label{sec:5}

In this work, we present a unified four-field, total-pressure formulation for thermo- and multiple-network poroelasticity that admits robust mathematical analysis and stable discretizations. A fully discrete mixed finite element method is developed by employing Taylor–Hood elements with consistent stabilization. A global-in-time iterative decoupling scheme is proposed to reduce computational complexity by separating mechanics from fluid/thermal transport. We rigorously prove that the iteration is a contraction mapping independent of physical parameters, which yields unconditional convergence. Comprehensive numerical tests confirm the theoretical properties, demonstrating both stability and optimal convergence behavior in representative examples.

\backmatter





\bmhead{Acknowledgements}

The work of H. Gu is supported by the National NSF of China No. 123B2016 and Shenzhen University of Information Technology (Grant No. SUIT2025KJ008). 
M. Cai acknowledges the support of The University of Maryland Baltimore Life Science Discovery (UMBILD)
Accelerator and REACH Hub Award 1U01GM152511-03, and the I-GAP Small Tech Transfer Grant I-GAP OTT-STT-272, sponsored by the Office of Technology Transfer at Morgan State University. J. Li was partially supported by the Shenzhen Sci-Tech Fund No. JCYJ20250604144225032 and  RCJC20200714114556020, Guangdong Basic and Applied Research Fund No. 2023B1515250005,  National Center for Applied Mathematics Shenzhen, and SUSTech International Center for Mathematics.


\bibliography{sn-bibliography}

\begin{appendices}
\section{Proof of Lemma \ref{lem:truncation_error}}
\label{appendix}

\begin{proof}
For convenience, we introduce the residuals for a generic variable $\chi \in \{ \xi, \phi, \psi\}$ as:
\begin{align*}
    \rho_\chi^n :=  (\partial_t \chi^n - \bar{\partial}_t \chi_h^n) + \bar{\partial}_t e_\chi^{I,n}.
\end{align*}
Using Taylor expansion, the residuals satisfy the following estimate:
\begin{align}
    \| \rho_\chi^n \|_{L^2}^2 
    & \lesssim \Delta t \int_{t_{n-1}}^{t_n} \|\partial_{tt} \chi \|^2 \, ds + \tfrac{1}{\Delta t} \int_{t_{n-1}}^{t_n} \|\partial_t e_\chi^{I,n} \|^2 \, ds.
\end{align}

Subtracting the fully discrete scheme \eqref{discreteform} from the continuous variational formulation \eqref{weakform} at $t=t_n$, we obtain the following system governing the discrete errors.
\begin{subequations}
\label{subdiscreteform}
\begin{align}
2\mu(\varepsilon(\bm e_u^n),\varepsilon(\bm v_h))
- (e_\xi^n,\nabla\cdot\bm v_h)
& =  0, \label{subdis:mon1}
\\
(\nabla\cdot\bm e_u^n, w_h)
+\tfrac{1}{\lambda} (e_\xi^n, w_h)
-\tfrac{\alpha}{\lambda} (e_\phi^n, w_h)
-\tfrac{\beta}{\lambda} (e_\psi^n, w_h)
& = 0,  \label{subdis:mon2}
\\
(c_1+\tfrac{\alpha^2}{\lambda})( {\partial}_t \phi^n - \bar{\partial}_t \phi^n_h , q_h)
+(\tfrac{\alpha\beta}{\lambda}-b_0)( {\partial}_t \psi^n - \bar{\partial}_t \psi^n_h , q_h)
-\tfrac{\alpha}{\lambda}( {\partial}_t \xi^n - \bar{\partial}_t \xi^n_h , q_h & ) \nonumber \\ 
+ K(\nabla e_\phi^n, \nabla q_h) + \gamma (e_\phi^n-e_\psi^n, q_h)
- \eta_{\phi} h^2 ( \nabla \bar{\partial}_t \phi^n_h, \nabla q_h) & = 0,  \label{subdis:mon3}
\\
(c_2+\tfrac{\beta^2}{\lambda})({\partial}_t \psi^n - \bar{\partial}_t \psi^n_h, s_h)
+(\tfrac{\alpha\beta}{\lambda}-b_0)( {\partial}_t \phi^n - \bar{\partial}_t \phi^n_h , s_h)
-\tfrac{\beta}{\lambda}({\partial}_t \xi^n - \bar{\partial}_t \xi^n_h, s_h & ) 
\nonumber \\
+ D(\nabla e_\psi^n, \nabla s_h) + \gamma (e_\psi^n - e_\phi^n, s_h)
- \eta_{\psi} h^2 ( \nabla \bar{\partial}_t \psi^n_h , \nabla s_h)  & = 0.  \label{subdis:mon4}
\end{align}
\end{subequations}
Using \eqref{eq:stokes_projection}, we rewrite equations \eqref{subdis:mon1} and \eqref{subdis:mon2} as:
\begin{align}
& 2\mu(\varepsilon(\bm e_u^{h,n}),\varepsilon(\bm v_h))
- (e_\xi^{h,n},\nabla\cdot\bm v_h) 
=  0, \label{adj:mon1}
\\
& (\nabla\cdot  \bar{\partial}_t \bm e_u^{h,n}, w_h)
+\tfrac{1}{\lambda} ( \bar{\partial}_t e_\xi^{h,n}, w_h)
-\tfrac{\alpha}{\lambda} ( \bar{\partial}_t e_\phi^{h,n}, w_h)
-\tfrac{\beta}{\lambda} ( \bar{\partial}_t e_\psi^{h,n}, w_h) 
\nonumber \\
= & - \tfrac{1}{\lambda} ( \bar{\partial}_t e_\xi^{I,n}, w_h)
+ \tfrac{\alpha}{\lambda} ( \bar{\partial}_t e_\phi^{I,n}, w_h)
+ \tfrac{\beta}{\lambda} ( \bar{\partial}_t e_\psi^{I,n}, w_h) 
\nonumber \\
= & \ 
(\nabla \cdot{\partial}_t \bm u^n - \nabla \cdot \bar{\partial}_t \bm u^n, w_h) 
-\tfrac{1}{\lambda} ( \rho_\xi^n, w_h)  + \tfrac{\alpha}{\lambda} (\rho_\phi^n, w_h)
+ \tfrac{\beta}{\lambda} (\rho_\psi^n, w_h).  \label{adj:mon2}
\end{align}
Summing the equations \eqref{subdis:mon3}, \eqref{subdis:mon4} and using \eqref{eq:elliptic_projection}, we obtain:
\begin{align}
& (c_1+\tfrac{\alpha^2}{\lambda})( \bar{\partial}_t e_\phi^{h,n} , q_h)
+(\tfrac{\alpha\beta}{\lambda}-b_0)( \bar{\partial}_t e_\psi^{h,n} , q_h)
-\tfrac{\alpha}{\lambda}( \bar{\partial}_t e_\xi^{h,n} , q_h ) + \eta_{\phi} h^2 ( \nabla \bar{\partial}_t e_\phi^{h,n}, \nabla q_h)
\nonumber \\
& + (c_2+\tfrac{\beta^2}{\lambda})( \bar{\partial}_t e_\psi^{h,n} , s_h)
+(\tfrac{\alpha\beta}{\lambda}-b_0)( \bar{\partial}_t e_\phi^{h,n} , s_h)
-\tfrac{\beta}{\lambda}( \bar{\partial}_t e_\xi^{h,n} , s_h ) + \eta_{\psi} h^2 ( \nabla \bar{\partial}_t e_\psi^{h,n}, \nabla s_h)
\nonumber \\
& + K(\nabla e_\phi^{h,n}, \nabla q_h) 
+ D(\nabla e_\psi^{h,n}, \nabla s_h) + \gamma(e_\phi^{h,n} - e_\psi^{h,n}, q_h - s_h) 
\nonumber \\ 
= & - (c_1+\tfrac{\alpha^2}{\lambda})( \rho_\phi^n , q_h)
- (\tfrac{\alpha\beta}{\lambda}-b_0)( \rho_\psi^n , q_h) + \tfrac{\alpha}{\lambda}( \rho_\xi^n , q_h ) - \eta_{\phi} h^2 ( \nabla \rho_\phi^n + \nabla {\partial}_t \phi^n, \nabla q_h) 
\nonumber \\ 
& - (c_2+\tfrac{\beta^2}{\lambda})( \rho_\psi^n , s_h) - (\tfrac{\alpha\beta}{\lambda}-b_0)(\rho_\psi^n , s_h) 
+ \tfrac{\beta}{\lambda}(\rho_\xi^n, s_h ) 
- \eta_{\psi} h^2 ( \nabla \rho_\psi^n + \nabla {\partial}_t \psi^n, \nabla s_h),
\label{adj:diff}
\end{align}
We choose the test functions $\bm{v}_h = \bar{\partial}_t \bm{e}_u^{h,n}$ in \eqref{adj:mon1}, $w_h = e_\xi^{h,n}$ in \eqref{adj:mon2}, and $(q_h , s_h) = (e_\phi^{h,n}, e_\psi^{h,n})$ in \eqref{adj:diff}. Summing the resulting equations and recalling the energy notations $\mathcal{E}$ and $\mathcal{V}$ defined in \eqref{def:E_func} and \eqref{def:V_func}, we obtain:
\begin{align}
& \tfrac{1}{2\Delta t} \mathcal{E}(\bm e_u^{h,n}, e_\xi^{h,n}, e_\phi^{h,n}, e_\psi^{h,n}) - \tfrac{1}{2\Delta t}\mathcal{E}(\bm e_u^{h,n-1}, e_\xi^{h,n-1}, e_\phi^{h,n-1}, e_\psi^{h,n-1}) 
\nonumber \\
& + \tfrac{\Delta t}{2}\mathcal{E}(\bar{\partial}_t \bm e_u^{h,n}, \bar{\partial}_t e_\xi^{h,n}, \bar{\partial}_t e_\phi^{h,n}, \bar{\partial}_t e_\psi^{h,n}) + \mathcal{V}(e_\phi^{h,n}, e_\psi^{h,n}) 
\nonumber \\
& + \tfrac{\eta_\phi h^2}{2\Delta t}
(
\|\nabla e_\phi^{h,n}\|^2
-
\|\nabla e_\phi^{h,n-1}\|^2
) + \tfrac{\eta_\phi h^2 \Delta t }{2}
\|\nabla( \bar{\partial}_t e_\phi^{h,n})\|^2
\nonumber \\
& + \tfrac{\eta_\psi h^2}{2\Delta t}
(
\|\nabla e_\psi^{h,n}\|^2
-
\|\nabla e_\psi^{h,n-1}\|^2)
+ \tfrac{\eta_\psi h^2 \Delta t}{2}
\|\nabla( \bar{\partial}_t e_\psi^{h,n})\|^2
\nonumber \\
= & \; (\nabla \cdot{\partial}_t \bm u^n - \nabla \cdot \bar{\partial}_t \bm u^n, e_\xi^{h,n})
-\tfrac{1}{\lambda}
( \rho_\xi^n - \alpha \rho_\phi^n - \beta \rho_\psi^n,\;
      e_\xi^{h,n} - \alpha e_\phi^{h,n} - \beta e_\psi^{h,n} ) 
\nonumber \\
& 
-(c_1-b_0)(\rho_\phi^n,e_\phi^{h,n})
-(c_2-b_0)(\rho_\psi^n,e_\psi^{h,n})
- b_0(\rho_\phi^n-\rho_\psi^n,\; e_\phi^{h,n}-e_\psi^{h,n})
\nonumber \\
& 
- \eta_{\phi} h^2 ( \nabla \rho_\phi^n + \nabla {\partial}_t \phi^n, \nabla e_\phi^{h,n}) 
- \eta_{\psi} h^2 ( \nabla \rho_\psi^n + \nabla {\partial}_t \psi^n, \nabla e_\psi^{h,n}), \label{eq:energy_equality}
\end{align}
where we have used the discrete differentiation identity:
\[
( \bar{\partial}_t x^n, x^n ) =  \tfrac{1}{2\Delta t} \|x^n\|^2 - \tfrac{1}{2\Delta t} \|x^{n-1}\|^2 +  \tfrac{\Delta t}{2} \|x^n - x^{n-1}\|^2.
\]

We now proceed to estimate the right-hand side terms of \eqref{eq:energy_equality}. First, we apply the discrete inf-sup condition \eqref{disc:infsup} to bound the error term $e_\xi^{h,n}$ using \eqref{adj:mon1}.
\begin{align}
    \tilde{C}_{is} \| e_\xi^{h,n} \| & \leq
    \sup_{\bm 0 \not = \bm{v}_h \in \bm{V}_h} \frac{(\nabla \cdot \bm{v}_h, e_\xi^{h,n})}{\| \bm{v}_h \|_{H^1}} 
    \nonumber \\ 
    & = \sup_{\bm 0 \not =  \bm{v}_h \in \bm{V}_h} \frac{2\mu(\varepsilon(\bm e_u^{h,n}),\varepsilon(\bm v_h))}{\| \bm{v}_h \|_{H^1}}
    \leq 2\mu \| \varepsilon(\bm e_u^{h,n}) \|,    
\end{align}
implying \eqref{lem:te3}. Using this bound, along with the Cauchy-Schwarz and Young's inequalities, we estimate the primary residual terms as follows:
\begin{align}
& (\nabla\cdot\partial_t \bm{u}^n-\nabla\cdot\bar\partial_t \bm{u}^n, e_\xi^{h,n})
-\tfrac{1}{\lambda}(\rho_\xi^n-\alpha\rho_\phi^n-\beta\rho_\psi^n,\;
e_\xi^{h,n}-\alpha e_\phi^{h,n}-\beta e_\psi^{h,n})
\nonumber \\
& -(c_1-b_0)(\rho_\phi^n,e_\phi^{h,n})-(c_2-b_0)(\rho_\psi^n,e_\psi^{h,n}) - b_0(\rho_\phi^n-\rho_\psi^n,\; e_\phi^{h,n}-e_\psi^{h,n})
\nonumber \\
\le & \ \mu \| \varepsilon(\bm e_u^{h,n}) \|^2 + \tfrac{1}{2\lambda} \|e_\xi^{h,n}-\alpha e_\phi^{h,n}-\beta e_\psi^{h,n}\|^2 + \tfrac{c_1-b_0}{2}\|e_\phi^{h,n}\|^2 + \tfrac{c_2-b_0}{2}\|e_\psi^{h,n}\|^2
\nonumber \\
& + \tfrac{b_0}{2} \|e_\phi^{h,n} - e_\psi^{h,n}\|^2 + C \Delta t \int_{t_{n-1}}^{t_n} ( \|\nabla\cdot \partial_{tt} \bm{u}\|^2 + \|\partial_{tt} \xi\|^2 + \|\partial_{tt} \phi\|^2 + \|\partial_{tt} \psi\|^2) \, ds
\nonumber \\
& + C\tfrac{1}{\Delta t} \int_{t_{n-1}}^{t_n} ( \|\partial_t e_\xi^{I,n} \|^2 + \|\partial_t e_\phi^{I,n} \|^2 + \|\partial_t e_\psi^{I,n} \|^2) \, ds.
\label{est:primary_residuals}
\end{align}
Next, we treat the stabilization residuals as follows:
\begin{align}
& -\eta_\phi h^2(\nabla\rho_\phi^n+\nabla\partial_t\phi^n,\nabla e_\phi^{h,n}) 
- \eta_{\psi} h^2 ( \nabla \rho_\psi^n + \nabla {\partial}_t \psi^n, \nabla e_\psi^{h,n}) \nonumber \\
\le & \ \tfrac{ \eta_\phi h^2}{2}\|\nabla e_\phi^{h,n}\|^2 + \tfrac{\eta_\psi h^2}{2} \|\nabla e_\psi^{h,n}\|^2 + C \eta_\phi h^2\|\nabla\partial_t\phi^n\|^2 + C \eta_\psi h^2\|\nabla\partial_t\psi^n\|^2
\nonumber \\
& + C h^2 \Delta t \int_{t_{n-1}}^{t_n} (\eta_\phi  \| \nabla \partial_{tt} \phi\|^2 + \eta_\psi \| \nabla \partial_{tt} \psi\|^2) \, ds
\nonumber \\
& + C\tfrac{h^2}{\Delta t} \int_{t_{n-1}}^{t_n} ( \eta_\phi \| \nabla \partial_t e_\phi^{I,n} \|^2 + \eta_\psi \| \nabla \partial_t e_\psi^{I,n} \|^2) \, ds.
\label{est:stabilization_residuals}
\end{align}

Multiplying \eqref{eq:energy_equality} by $2\Delta t$, summing over $n=1$ to $N$, and dropping some non-negative terms on the left-hand side, we can combine \eqref{est:primary_residuals} and \eqref{est:stabilization_residuals} to obtain:
\begin{align}
& \mathcal{E}(\bm e_u^{h,N}, e_\xi^{h,N}, e_\phi^{h,N}, e_\psi^{h,N}) +  2 \Delta t \sum_{n=1}^N \mathcal{V}(e_\phi^{h,n}, e_\psi^{h,n}) 
\nonumber \\ 
\leq & \ \Delta t \sum_{n=1}^N \left(\mathcal{E}(\bm e_u^{h,n}, e_\xi^{h,n}, e_\phi^{h,n}, e_\psi^{h,n}) + \eta_\phi h^2 \|\nabla e_\phi^{h,n}\|^2 + \eta_\psi h^2 \|\nabla e_\psi^{h,n}\|^2 \right) 
\nonumber \\
& + C (\Delta t)^2 \int_{0}^{T_f} \big( \|\partial_{tt} \bm{u}\|_{H^1}^2 + \|\partial_{tt} \xi\|^2 + (1+\eta_{\phi}h^2) \|\partial_{tt} \phi\|^2_{H^1} + (1+\eta_{\psi}h^2)\|\partial_{tt} \psi\|^2_{H^1} \big) \, ds
\nonumber \\
& 
+ C \int_{0}^{T_f} \big( \|\partial_t e_\xi^{I} \|^2 + \|\partial_t e_\phi^{I} \|^2 + \|\partial_t e_\psi^{I} \|^2 + \eta_\phi h^2 \| \partial_t e_\phi^{I} \|_{H^1}^2 + \eta_\psi h^2 \| \partial_t e_\psi^{I} \|_{H^1}^2 \big) \, ds
\nonumber \\
& + C \Delta t \sum_{n=1}^N \big( \eta_\phi h^2\|\partial_t\phi^n\|^2_{H^1} + \eta_\psi h^2\|\partial_t\psi^n\|^2_{H^1} \big).
\label{ineq:energy_summed}
\end{align}
Applying the discrete Grönwall lemma and utilizing the approximation properties of the projection operators, we arrive at the final error estimate:
\begin{align}
& \mathcal{E}(\bm e_u^{h,N}, e_\xi^{h,N}, e_\phi^{h,N}, e_\psi^{h,N}) + 2 \Delta t \sum_{n=1}^N \mathcal{V}(e_\phi^{h,n}, e_\psi^{h,n})
\nonumber \\
\leq & \ C (\Delta t)^2 \int_{0}^{T_f} \big( \|\partial_{tt} \bm{u}\|_{H^1}^2 + \|\partial_{tt} \xi\|^2 + (1+\eta_{\phi}h^2) \|\partial_{tt} \phi\|^2_{H^1} + (1+\eta_{\psi}h^2)\|\partial_{tt} \psi\|^2_{H^1} \big) \, ds
\nonumber \\
& + C h^{2k} \int_{0}^{T_f} \big( \|\partial_t \bm u \|_{H^{k+1}}^2+ h^{2k}\|\partial_t \xi \|_{H^k}^2 \big) \, ds + C h^{2l+2} \int_{0}^{T_f} \big( \|\partial_t \phi \|_{H^{l+1}}^2 
+ \|\partial_t \psi \|_{H^{l+1}}^2 \big) \, ds
\nonumber \\
& + C T_f h^2 \big( \eta_\phi \max_{1\leq n \leq N} \| \partial_t\phi^n\|^2_{H^1}  + \eta_\psi \max_{1\leq n \leq N} \| \partial_t\psi^n\|^2_{H^1}  \big).
\label{ineq:energy_final}
\end{align}
Recalling that $2\mu\|\varepsilon(\bm{e}_u^{h,N})\|^2 \leq \mathcal{E}(\bm e_u^{h,N}, e_\xi^{h,N}, e_\phi^{h,N}, e_\psi^{h,N})$, we complete the proof of the first estimate.

To prove the second estimate, we rewrite the first equation of the error system as:
\begin{align}
2\mu(\varepsilon(\bar{\partial}_t \bm e_u^{h,n}),\varepsilon(\bm v_h))
- (\bar{\partial}_t e_\xi^{h,n},\nabla\cdot\bm v_h)
& =  0, \label{adj:mon12}
\end{align}
We choose the test functions $\bm v_h = \bar{\partial}_t \bm e_u^{h,n}$ in \eqref{subdis:mon1}, $w_h = \bar{\partial}_t e_\xi^{h,n}$ in \eqref{adj:mon2}, $q_h = \bar{\partial}_t e_\phi^{h,n}$ in \eqref{adj:diff}, and $s_h = \bar{\partial}_t e_\psi^{h,n}$ in \eqref{adj:diff}. This yields:
\begin{align}
& \mathcal{E}(\bar{\partial}_t \bm e_u^{h,n}, \bar{\partial}_t e_\xi^{h,n}, \bar{\partial}_t e_\phi^{h,n}, \bar{\partial}_t e_\psi^{h,n}) + \eta_\phi h^2 \|\nabla \bar{\partial}_t e_\phi^{h,n}\|^2 + \eta_\psi h^2 \|\nabla \bar{\partial}_t e_\psi^{h,n}\|^2 
\nonumber \\
& + \tfrac{1}{2\Delta t} \mathcal{V}( e_\phi^{h,n}, e_\psi^{h,n}) - \tfrac{1}{2\Delta t}\mathcal{V}( e_\phi^{h,n-1}, e_\psi^{h,n-1}) + \tfrac{\Delta t}{2}\mathcal{V}( \bar{\partial}_t e_\phi^{h,n}, \bar{\partial}_t e_\psi^{h,n})
\nonumber \\ 
= & \; (\nabla \cdot{\partial}_t \bm u^n - \nabla \cdot \bar{\partial}_t \bm u^n, \bar{\partial}_t e_\xi^{h,n})
-\tfrac{1}{\lambda}
( \rho_\xi^n - \alpha \rho_\phi^n - \beta \rho_\psi^n,\;
      \bar{\partial}_t e_\xi^{h,n} - \alpha \bar{\partial}_t e_\phi^{h,n} - \beta \bar{\partial}_t e_\psi^{h,n} ) 
\nonumber \\
& 
-(c_1-b_0)(\rho_\phi^n, \bar{\partial}_t e_\phi^{h,n})
-(c_2-b_0)(\rho_\psi^n, \bar{\partial}_t e_\psi^{h,n})
- b_0(\rho_\phi^n-\rho_\psi^n,\; \bar{\partial}_t e_\phi^{h,n} - \bar{\partial}_t e_\psi^{h,n})
\nonumber \\
& 
- \eta_{\phi} h^2 ( \nabla \rho_\phi^n + \nabla {\partial}_t \phi^n, \nabla \bar{\partial}_t e_\phi^{h,n}) 
- \eta_{\psi} h^2 ( \nabla \rho_\psi^n + \nabla {\partial}_t \psi^n, \nabla \bar{\partial}_t e_\psi^{h,n}), \label{eq:energy_equality3}
\end{align}
Similarly, we apply the discrete inf-sup condition \eqref{disc:infsup} and \eqref{adj:mon12} to bound
\begin{align}
    \tilde{C}_{is} \| \bar{\partial}_t e_\xi^{h,n} \| & \leq
    \sup_{\bm 0 \not = \bm{v}_h \in \bm{V}_h} \frac{(\nabla \cdot \bm{v}_h, \bar{\partial}_t e_\xi^{h,n})}{\| \bm{v}_h \|_{H^1}} 
    \nonumber \\ 
    & = \sup_{\bm 0 \not =  \bm{v}_h \in \bm{V}_h} \frac{2\mu(\varepsilon( \bar{\partial}_t \bm e_u^{h,n}),\varepsilon(\bm v_h))}{\| \bm{v}_h \|_{H^1}}
    \leq 2\mu \| \varepsilon(\bar{\partial}_t \bm e_u^{h,n}) \|.    
\end{align} 
We then estimate the primary residual terms:
\begin{align}
& (\nabla\cdot\partial_t \bm{u}^n-\nabla\cdot\bar\partial_t \bm{u}^n, \bar{\partial}_t e_\xi^{h,n})
-\tfrac{1}{\lambda}(\rho_\xi^n-\alpha\rho_\phi^n-\beta\rho_\psi^n,\;
\bar{\partial}_t e_\xi^{h,n}-\alpha \bar{\partial}_t e_\phi^{h,n}-\beta \bar{\partial}_t e_\psi^{h,n})
\nonumber \\
& -(c_1-b_0)(\rho_\phi^n,\bar{\partial}_t e_\phi^{h,n})-(c_2-b_0)(\rho_\psi^n,\bar{\partial}_t e_\psi^{h,n}) - b_0(\rho_\phi^n-\rho_\psi^n,\; \bar{\partial}_t e_\phi^{h,n}-\bar{\partial}_t e_\psi^{h,n})
\nonumber \\
\le & \ \mu \| \varepsilon(\bar{\partial}_t \bm e_u^{h,n}) \|^2 + \tfrac{1}{2\lambda} \|\bar{\partial}_t e_\xi^{h,n}-\alpha \bar{\partial}_t e_\phi^{h,n}-\beta \bar{\partial}_t e_\psi^{h,n}\|^2 + \tfrac{c_1-b_0}{2}\|\bar{\partial}_t e_\phi^{h,n}\|^2 + \tfrac{c_2-b_0}{2}\|\bar{\partial}_t e_\psi^{h,n}\|^2
\nonumber \\
& + \tfrac{b_0}{2} \|\bar{\partial}_t e_\phi^{h,n} - \bar{\partial}_t e_\psi^{h,n}\|^2 + C \Delta t \int_{t_{n-1}}^{t_n} ( \|\nabla\cdot \partial_{tt} \bm{u}\|^2 + \|\partial_{tt} \xi\|^2 + \|\partial_{tt} \phi\|^2 + \|\partial_{tt} \psi\|^2) \, ds
\nonumber \\
& + C\tfrac{1}{\Delta t} \int_{t_{n-1}}^{t_n} ( \|\partial_t e_\xi^{I,n} \|^2 + \|\partial_t e_\phi^{I,n} \|^2 + \|\partial_t e_\psi^{I,n} \|^2) \, ds.
\label{est:primary_residuals2}
\end{align}
Next, we treat the stabilization residuals:
\begin{align}
& -\eta_\phi h^2(\nabla\rho_\phi^n+\nabla\partial_t\phi^n,\nabla \bar{\partial}_t e_\phi^{h,n}) 
- \eta_{\psi} h^2 ( \nabla \rho_\psi^n + \nabla {\partial}_t \psi^n, \nabla \bar{\partial}_te_\psi^{h,n}) \nonumber \\
\le & \ \tfrac{ \eta_\phi h^2}{2}\|\nabla  \bar{\partial}_t e_\phi^{h,n}\|^2 + \tfrac{\eta_\psi h^2}{2} \|\nabla \bar{\partial}_t e_\psi^{h,n}\|^2 + C \eta_\phi h^2\|\nabla\partial_t\phi^n\|^2 + C \eta_\psi h^2\|\nabla\partial_t\psi^n\|^2
\nonumber \\
& + C h^2 \Delta t \int_{t_{n-1}}^{t_n} (\eta_\phi  \| \nabla \partial_{tt} \phi\|^2 + \eta_\psi \| \nabla \partial_{tt} \psi\|^2) \, ds
\nonumber \\
& + C\tfrac{h^2}{\Delta t} \int_{t_{n-1}}^{t_n} ( \eta_\phi \| \nabla \partial_t e_\phi^{I,n} \|^2 + \eta_\psi \| \nabla \partial_t e_\psi^{I,n} \|^2) \, ds.
\label{est:stabilization_residuals2}
\end{align}
Multiplying \eqref{eq:energy_equality3} by $2\Delta t$ and summing over $n=1$ to $N$, we drop the non-negative terms involving $\bar{\partial}_t$ on the left-hand side. Then, combining the residual estimates \eqref{est:primary_residuals2} and \eqref{est:stabilization_residuals2} with the approximation properties of the projection operators, we obtain:
\begin{align}
\Delta &  t \sum_{n=1}^N \mathcal{E}(\bar{\partial}_t \bm e_u^{h,n}, \bar{\partial}_t e_\xi^{h,n}, \bar{\partial}_t e_\phi^{h,n}, \bar{\partial}_t e_\psi^{h,n}) + \mathcal{V}(e_\phi^{h,N}, e_\psi^{h,N}) 
\nonumber \\ 
\leq & \ C (\Delta t)^2 \int_{0}^{T_f} \big( \|\partial_{tt} \bm{u}\|_{H^1}^2 + \|\partial_{tt} \xi\|^2 + (1 + \eta_\phi h^2) \| \partial_{tt} \phi\|_{H^1}^2 + (1 + \eta_\psi h^2) \| \partial_{tt} \psi\|_{H^1}^2\big) \, ds
\nonumber \\
& 
+ C \int_{0}^{T_f} \big( \|\partial_t e_\xi^{I} \|^2 + \|\partial_t e_\phi^{I} \|^2 + \|\partial_t e_\psi^{I} \|^2 + \eta_\phi h^2 \| \partial_t e_\phi^{I} \|_{H^1}^2 + \eta_\psi h^2 \| \partial_t e_\psi^{I} \|_{H^1}^2 \big) \, ds
\nonumber \\
& + C \Delta t \sum_{n=1}^N \big( \eta_\phi h^2\|\nabla\partial_t\phi^n\|^2 + \eta_\psi h^2\|\nabla\partial_t\psi^n\|^2 \big).
\nonumber \\
\leq & \ C (\Delta t)^2 \int_{0}^{T_f} \big( \|\partial_{tt} \bm{u}\|_{H^1}^2 + \|\partial_{tt} \xi\|^2 + (1+\eta_{\phi}h^2) \|\partial_{tt} \phi\|^2_{H^1} + (1+\eta_{\psi}h^2)\|\partial_{tt} \psi\|^2_{H^1} \big) \, ds
\nonumber \\
& + C h^{2k} \int_{0}^{T_f} \big( \|\partial_t \bm u \|_{H^{k+1}}^2+ h^{2k}\|\partial_t \xi \|_{H^k}^2 \big) \, ds + C h^{2l+2} \int_{0}^{T_f} \big( \|\partial_t \phi \|_{H^{l+1}}^2 
+ \|\partial_t \psi \|_{H^{l+1}}^2 \big) \, ds
\nonumber \\
& + C T_f h^2 \big( \eta_\phi \max_{1\leq n \leq N} \| \partial_t\phi^n\|^2_{H^1}  + \eta_\psi \max_{1\leq n \leq N} \| \partial_t\psi^n\|^2_{H^1}  \big).
\label{ineq:energy_final2}
\end{align}
Recalling that $K\|\nabla e_\phi^{h,N}\|^2 + D\|\nabla e_\psi^{h,N}\|^2 \leq \mathcal{V}(e_\phi^{h,N}, e_\psi^{h,N})$, we complete the proof of the second estimate.
This completes the proof.
\end{proof}

\end{appendices}




\end{document}